\numberwithin{equation}{section}
\theoremstyle{plain}
\newtheorem{theorem}{Theorem}[section]
\newtheorem{corollary}[theorem]{Corollary}
\newtheorem{lemma}[theorem]{Lemma}
\newtheorem{proposition}[theorem]{Proposition}
\theoremstyle{definition}
\newtheorem{problem}[theorem]{Open Problem}
\newtheorem{example}[theorem]{Example}
\newtheorem{definition}[theorem]{Definition}
\theoremstyle{remark}
\newtheorem{remark}[theorem]{Remark}
\newcommand\bdf{\begin{definition}}
\newcommand\bpr{\begin{proposition}}
\newcommand\brk{\begin{remark}}
\newcommand\blm{\begin{lemma}}
\newcommand\bexe{\begin{exercise}}
\newcommand\bexa{\begin{example}}
\newcommand\beqn{\begin{eqnarray*}}
\newcommand\edf{\end{definition}}
\newcommand\epr{\end{proposition}}
\newcommand\erk{\end{remark}}
\newcommand\elm{\end{lemma}}
\newcommand\eexe{\end{exercise}}
\newcommand\eexa{\end{example}}
\newcommand\eeqn{\end{eqnarray*}}
\newcommand{\ds}{\mathsf{d}}
\newcommand{\lip}[1]{{\mathrm{lip}}({#1})}
\newcommand{\mm}{\mathfrak m}
\newcommand{\ms}{(X,\ds,\mm)}
\newcommand{\rcd}{{\rm RCD}(K, \infty)}
\newcommand{\De}{\mathrm{D}}
\newcommand{\R}{\mathbb{R}}
\newcommand{\pr}{\mathcal{P}}
\newcommand{\Lip}{\mathop{\rm Lip}\nolimits}
\renewcommand{\d }{{\mathrm d}}
\newcommand{\restr}[1]{\lower3pt\hbox{$|_{#1}$}}
\newcommand{\nchi}{{\raise.3ex\hbox{$\chi$}}}
\title{\Large{Barycenter curvature-dimension condition for extended metric measure spaces}
}
\begin{document}
\author{Bang-Xian Han\thanks{School of   Mathematics,  Shandong University,  Jinan, China.  Email: hanbx@sdu.edu.cn. }
\and Dengyu Liu
\thanks{School of Mathematical Sciences, University of Science and Technology of China, Hefei, China. Email:  yzldy@mail.ustc.edu.cn}
\and Zhuonan Zhu
\thanks{School of Mathematical Sciences, University of Science and Technology of China, Hefei, China.  Email: zhuonanzhu@mail.ustc.edu.cn}
}

\date{\today} 
\maketitle

\begin{abstract}
	In this survey, we introduce a new curvature-dimension condition for extended metric measure spaces,  called Barycenter-Curvature Dimension condition,  from the perspective of Wasserstein barycenter.

\end{abstract}

\textbf{Keywords}: Wasserstein barycenter,  metric measure space,  curvature-dimension condition,  Ricci curvature, optimal transport

\textbf{MSC 2020}: 53C23, 51F99, 49Q22\\
\tableofcontents

\section{Optimal transport and displacement convexity}
Optimal transport theory stems from a practical problem:
\begin{quote}
\emph{What is the most economical way to transport materials from one place to another,  with given supply and demand?}
\end{quote}
In  mathematics,  this  can be stated as a minimization problem:
$$
   T_c(\mu,\nu)=\min_{\pi\in\Pi(\mu, \nu)} \int_{X\times X} c(x,y) \, \d\pi(x, y)
     $$
where $c:X\times X\rightarrow [0,\infty]$ is a cost function and $\Pi(\mu, \nu)$ is composed of all probability measure $\pi$ on $X\times X$ whose marginals are $\mu$ and $\nu$. 

Let  $(X,\ds)$ be a metric space and consider the  optimal transport problem with the cost function $c=\ds^2$.  The Wasserstein distance $W_2$ is defined by $W_2^2(\mu,\nu):=T_{\ds^2}(\mu,\nu)$.  It is immediate to see that $W_2$ is an extended metric (we allow  $W_2(\mu, \nu)=\infty$) on the space of Borel probability measures $\mathcal{P}(X)$. To make it become a metric, we often restrict  the study on a subspace $$\mathcal{P}_2(X,\ds):=\left\{\mu\in\mathcal{P}(X): \int_X \ds^2(x,x_0)\,\d x<\infty ~\text{for some}~x_0\in X\right\}.$$The metric space $(\mathcal{P}_2(X,\ds), W_2)$ is called Wasserstein space.  The geometry theory of  Wasserstein space  has a wide range of applications in many fields, including differential equations, geometric and functional inequalities. We refer to \cite{AG-U} and \cite{V-O} for a comprehensive discussion about optimal transport and  Wasserstein space.  In this survey, we mainly focus on the connection between optimal transport and the geometry of metric measure spaces.

\medskip

In his celebrated paper \cite{mccann1997convexity}, McCann studied the convexity of functionals on $(\mathcal{P}_2(\mathbb{R}^n, |\cdot|),  W_2)$.  We say that a functional $\mathcal{F}$  is $K$-displacement convex (in the sense of McCann) for some $K\in \R$,  if for any $\mu_0,\mu_1 \in \mathcal{P}_2(\mathbb{R}^n, |\cdot|)$,  there exists  a geodesic $\{\mu_t\}$ in the Wasserstein space, connecting  $\mu_0$ and $\mu_1$, such that 
$$
\frac{K}{2}t(1-t)W_2^2(\mu_0,\mu_1)+\mathcal{F}(\mu_t)\leq (1-t)\mathcal{F}(\mu_0)+t\mathcal{F}(\mu_1).
$$ 

\begin{example}[Potential energy]
Let $f$ be a nonnegative  convex function on $\mathbb{R}^n$. Then $\mathcal{V}(\mu):=\int_{\mathbb{R}^n}f\d \mu$ is $0$-displacement convex, or displacement convex for simplicity.
\end{example}

There is a family of functionals, called internal energies, which plays  crucial role in the study of metric measure spaces. In physics, this kind of functionals are more known as entropies since they have a thermodynamic interpretation of internal energy of a gas. Precisely, the internal energy $\mathcal{H}^U_\mm$ is defined as
\begin{equation*}
	\mathcal{H}^U_\mm(\mu):=
	\left \{\begin{array}{ll}
		\int U(\rho)\,\d \mm ~~~&\text{if}~ \mu=\rho\,\mm\\
		+\infty &\text{otherwise},
	\end{array}\right.
\end{equation*} 
where $U:[0,\infty)\rightarrow [0,\infty]$ is lower semicontinuous, convex and with super-linear  growth. When $U(x)=x \ln x$, $\mathcal{H}^U_\mm$ is known as Boltzmann entropy,  and it is usually denoted by ${\rm Ent}_{\mm}$.  We refers the readers to \cite{AGS-G} and \cite{SantambrogioBook} for more  functionals and applications.

\begin{proposition}[Boltzmann entropy, Theorem 2.2 in \cite {mccann1997convexity}]
	In $(\mathbb{R}^n, |\cdot|,\mathcal{L}^n)$, the Boltzmann entropy is displacement convex:
$$
{\rm Ent}_\mm(\mu_t)\leq (1-t){\rm Ent}_\mm(\mu_0)+t{\rm Ent}_\mm(\mu_1)~~~t\in [0,1]
$$
for any geodesic $\{\mu_t\}$ in the Wasserstein space.
\end{proposition}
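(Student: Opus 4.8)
The plan is to exploit the explicit structure of Wasserstein geodesics in $\mathbb{R}^n$, reduce the entropy along the geodesic to a determinant integral via a change of variables, and close with a pointwise concavity inequality. First I would dispose of the trivial case: if either $\mu_0$ or $\mu_1$ fails to be absolutely continuous with respect to $\mathcal{L}^n$, its entropy—and hence the right-hand side—is $+\infty$, so there is nothing to prove. Thus I assume $\mu_0=\rho_0\mathcal{L}^n$ and $\mu_1=\rho_1\mathcal{L}^n$ with finite entropy.

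Next I would invoke Brenier's (McCann's) theorem: since $\mu_0\ll\mathcal{L}^n$, the optimal transport from $\mu_0$ to $\mu_1$ is induced by $T=\nabla\varphi$ for some convex $\varphi$, and the unique $W_2$-geodesic is the displacement interpolation $\mu_t=(T_t)_\#\mu_0$ with $T_t:=(1-t)\mathrm{Id}+t\nabla\varphi=\nabla\varphi_t$, where $\varphi_t(x):=(1-t)\frac{|x|^2}{2}+t\varphi(x)$ is convex, and strictly convex for $t<1$. Hence each $T_t$ is again a gradient of a convex function, injective $\mathcal{L}^n$-a.e. By Alexandrov's theorem $\varphi$ has a.e. a second-order expansion, so the Monge--Ampère change of variables applies with the Alexandrov Hessian: $\mu_t=\rho_t\mathcal{L}^n$ with $\rho_t(T_t(x))\det DT_t(x)=\rho_0(x)$ and $DT_t(x)=(1-t)I+tD^2\varphi(x)$, where $D^2\varphi(x)\succeq 0$ a.e.

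Performing the substitution $y=T_t(x)$ in the entropy integral and using $\rho_t(T_t(x))=\rho_0(x)/\det DT_t(x)$, the Jacobian cancels and one obtains
$$
\mathrm{Ent}_\mm(\mu_t)=\mathrm{Ent}_\mm(\mu_0)-\int_{\mathbb{R}^n}\rho_0(x)\,\ln\det DT_t(x)\,\dx .
$$
The convexity in $t$ then reduces to a pointwise algebraic fact: diagonalizing $D^2\varphi(x)$ with eigenvalues $\lambda_1,\dots,\lambda_n\ge 0$ gives $\ln\det DT_t(x)=\sum_{i=1}^n\ln\big((1-t)+t\lambda_i\big)$, and each summand is the concave function $\ln$ composed with an affine function of $t$, hence concave. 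Therefore $t\mapsto-\ln\det DT_t(x)$ is convex; integrating against $\rho_0\ge 0$ and adding the constant $\mathrm{Ent}_\mm(\mu_0)$ shows $t\mapsto\mathrm{Ent}_\mm(\mu_t)$ is convex, so the chord inequality $\mathrm{Ent}_\mm(\mu_t)\le(1-t)\mathrm{Ent}_\mm(\mu_0)+t\,\mathrm{Ent}_\mm(\mu_1)$ holds on $[0,1]$.

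The main obstacle is the rigorous justification of the change-of-variables identity at the level of merely convex potentials, where $\varphi$ is differentiable only a.e. and its Hessian exists only in the Alexandrov sense. This demands checking that $T_t$ maps $\mathcal{L}^n$-negligible sets to negligible sets, that $\det DT_t>0$ a.e., that $\mu_t$ is indeed absolutely continuous, and that the integrals splitting $\ln(\rho_0/\det DT_t)$ into $\int\rho_0\ln\rho_0$ and $\int\rho_0\ln\det DT_t$ are separately meaningful, which is controlled by the finite-entropy hypothesis. Once these measure-theoretic points are secured, the remaining convexity is purely the concavity of $\log\det$ along the segment $(1-t)I+tD^2\varphi$.
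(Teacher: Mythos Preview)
The paper does not supply its own proof of this proposition: it is stated as a citation of McCann's original result (Theorem~2.2 in \cite{mccann1997convexity}) and left unproven in this survey. Your argument is essentially the classical one from that reference---Brenier's map, the Monge--Amp\`ere change of variables with the Alexandrov Hessian, and the concavity of $t\mapsto\log\det\big((1-t)I+tD^2\varphi\big)$---so there is nothing to compare against here beyond noting that your sketch reproduces McCann's approach faithfully and flags the right technical caveat (the a.e.\ validity of the Jacobian identity for convex potentials).
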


\section{Synthetic theory of Ricci curvature}\label{synthetic}
Since the seminal paper of Otto and Villani  \cite{OttoVillani00},  the link between displacement convexity and  geometry of    metric measure spaces has been deeply studied from the perspective of Ricci curvature.  On one hand, Cordero-Erausquin, McCann and Schmuckenschl{\"a}ger \cite{CMS01} showed that  the Boltzmann entropy is $K$-displacement convex in the Wasserstein space over a compact Riemannian manifold with Ricci curvature bounded from below by $K$.  On the other hand,  von Renesse and Sturm \cite{SVR-T} proved that  displacement convexity actually implies lower Ricci curvature bounds. Thus, in the setting of Riemannian manifolds, lower Ricci curvature bounds can be characterized by  the displacement convexity of certain functionals. This fact is one of   main motivation and starting points  of Lott--Sturm--Villani theory about non-smooth metric measure spaces.

\subsection*{Curvature-dimension condition}
A synthetic curvature-dimension condition for metric measure spaces,  called {\rm CD}  condition  today,   was introduced independently by Lott--Villani \cite{Lott-Villani09} and Sturm \cite{S-O1}.  They adopt the interpolation inequality of Cordero-Erausquin,  McCann and Schmuckenschl{\"a}ger \cite{CMS01}  as a definition.

\begin{definition}[${\rm CD}(K,\infty)$ condition]\label{def:CD infty}
	Let $K \in \R$.  We say that a  metric measure space  $(X,\ds,\mm)$ verifies ${\rm CD}(K,\infty)$ condition,   if for any  $\mu_{0},\mu_{1} \in \pr_{2}(X,\ds)$, there exists a geodesic $\mu_t$ such that for all $t\in(0,1)$:
	\begin{equation}\label{eq:defCD infty}
		{\rm Ent}_{\mm}(\mu_t)\leq(1-t){\rm Ent}_{\mm}(\mu_0)+t{\rm Ent}_{\mm}(\mu_1)-\frac{K}{2}(1-t)tW_2^2(\mu_0,\mu_1).
	\end{equation}
\end{definition}

	It is worth to mention that ${\rm CD}(K,\infty)$ is a dimensionless curvature condition. There  is a variant,  called ${\rm CD}(K,N)$ condition,  which has a dimensional parameter $N$.  We refer to  \cite{S-O2} and \cite{V-O} for the definition.

     \subsection*{Riemannian curvature-dimension condition}
     In \cite{Ohta09}, Ohta proved that some Finsler manifolds satisfy $\rm CD$ condition.  Later,  Ambrosio, Gigli and Savar{\' e}, in \cite{AGS-M}, introduced a Riemannian variant of the curvature-dimension condition, called Riemannian Curvature-Dimension condition ($\rm RCD$ for short),  ruling out Finsler manifolds.
     
     Recall that Cheeger energy (cf. \cite{C-D})  is a functional defined on $ L^2(X, \mm)$:
     $$
     {\rm Ch}(f):=\frac{1}{2}\inf\left\{\liminf_{i\rightarrow\infty}\int_X \left|\lip {f_i}\right|^2\,\d\mm:f_i\in {\rm Lip}(X, \ds),\left\|f_i-f\right\|_{L^2}\rightarrow 0\right\},
     $$
     where
     $$
     \left|\lip h\right|(x):=\limsup_{y\rightarrow x}\frac{\left|h(y)-h(x)\right|}{\ds(x,y)}
     $$
     for $h\in \Lip(X,\ds)$. We define the Sobolev space  $W^{1,2}\ms$ by
     $$
     W^{1,2}\ms:=\left\{f\in L^2(X, \mm):  {\rm Ch}(f)<\infty\right\}.
     $$
     Note that, a smooth Finsler manifold  is Riemannian if and only if  its Sobolev space $W^{1,2}$ is a Hilbert space; see  \cite[Section 4]{gigli2023giorgi}. This fact motivates the following definition, proposed by Gigli in \cite{G-O}.
     \begin{definition}[Infinitesimally Hilbertian space]
     	We say that a metric measure space  $(X,\ds,\mm)$ is an infinitesimally Hilbertian space if $W^{1,2}\ms$ is  Hilbert.  Equivalently,   the Cheeger energy ${\rm Ch}$ is a quadratic form  in the sense that
     	\begin{equation*}
     		{\rm Ch}(f+g)+{\rm Ch}(f-g)=2{\rm Ch}(f)+2{\rm Ch}(g)\qquad \text{for all} ~f,g\in W^{1,2}\ms.
     	\end{equation*}
     \end{definition}
     
     We can then give the following definition:
     
     \begin{definition}[${\rm RCD}(K,\infty)$ condition, see \cite{AGS-M} and \cite{AGMR-R}]
     	Let $K \in \R$. A metric measure space  $(X,\ds,\mm)$ verifies Riemannian curvature bounded from below by $K$ (or $(X,\ds,\mm)$ is an ${\rm RCD}(K,\infty)$ space) if it satisfies ${\rm CD}(K,\infty)$ condition and $W^{1,2}\ms$ is a Hilbert space.
     \end{definition}

\subsection*{Evolution Variation Inequality (EVI)}

In \cite{daneri2008eulerian}, Daneri and Savar{\'e}  provided an alternative way to study  synthetic curvature-dimension condition in possibly infinite dimensional,  non-compact spaces (see Ambrosio, Erbar and Savar\'e's paper \cite{ambrosio2016optimal} for more discussion in the setting of extended metric measure spaces).  They established the existence of a stronger formulation of gradient flows, called ${\rm EVI}_K$ (Evolution Variation Inequality) gradient flows,  on Riemannian manifolds.

\begin{definition}[See \cite{ambrosio2016optimal}]\label{def:EVI}
	Let $(Y, \ds_Y)$ be an extended metric space and  $E:Y\rightarrow \mathbb{R}\cup \left\{+\infty\right\}$ be a lower semi-continuous functional. Denote ${\De (E)}:={\left\{E<+\infty\right\}}$. For any $y_0\in \De(E)$,  we say that $(0,\infty)\ni t\rightarrow y_t \in Y$ is an ${\rm EVI}_{K}$-gradient flow  of $E$, starting from $y_0$,  if it is a locally absolutely continuous curve,  such that $y_t\overset{\ds_Y}{\rightarrow}y_0$ as $t\rightarrow 0$ and the following inequality is satisfied: for any $z\in \De (E)$ satisfying $\ds_Y(z, y_t)<\infty$ for some (and then all) $t\in (0, \infty)$:
	\begin{equation}
		\frac{1}{2}\frac{\d }{\d t}\ds_Y^2(y_t,z)+\frac{K}{2}\ds_Y^2(y_t,z)\leq	E(z)-E(y_t),\qquad \text{for a.e. } t\in(0,\infty).
	\end{equation}
\end{definition}

It has been  proved in \cite{daneri2008eulerian} that this stronger formulation of gradient flows   implies (geodesic) convexity.  As an application,   one can prove McCann's displacement convexity by showing the existence of  ${\rm EVI}_K$-gradient flows in Wasserstein space.

The ${\rm EVI}_K$-gradient flow  encodes information not only about the functional $E$ itself but also the structure of the underling space.  On one hand,  Ohta and Sturm \cite{OS-H} proved that the heat flow on a Finsler manifold is linear if and only if it is Riemannian. On the other hand,  by Daneri and Savar{\'e} \cite{daneri2008eulerian},   we know the existence of ${\rm EVI}_K$-gradient flows of the relative entropy implies the linearity of the heat flow.  This suggests that ${\rm EVI}_K$-gradient flow is closely related to the Riemannian (infinitesimally Hilbertian) structure  of the underlying space. Remarkably,    the $\rm RCD$ condition can be equivalently defined via heat flow.
\begin{theorem}[\cite{AGS-M}, Theorem 5.1; \cite{AGMR-R}, Theorem 6.1]\label{rcdevi}
	Let $(X,\ds,\mm)$ be a metric measure space and $K\in\mathbb{R}$. Then $(X,\ds,\mm)$ satisfies ${\rm RCD}(K,\infty)$ condition if only and if  $(X,\ds,\mm)$ is a length space satisfying an exponential growth condition
	$$
	\int_X e^{-c\ds(x_0,x)^2}\,\d\mm(x)<\infty,\qquad \text{for some } x_0 \in X \text{ and } c>0,
	$$
	and for all $\mu\in\mathcal{P}_2(X, \ds)$ there exists an ${\rm EVI}_K$-gradient flow of ${\rm Ent}_{\mm}$, in $(\mathcal{P}_2(X, \ds),W_2)$,  starting from $\mu$.
\end{theorem}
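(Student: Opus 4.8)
The plan is to prove the two implications separately, with the forward direction being substantially harder. Throughout I would exploit two \emph{a priori} distinct notions of gradient flow: the $L^2(\mm)$-gradient flow of the Cheeger energy $\mathrm{Ch}$, which I denote by the semigroup $(P_t)_{t\ge 0}$ and call the heat flow, and the metric gradient flow of $\ent{\mm}$ in $(\mathcal{P}_2(X,\ds),W_2)$. The heart of the argument is to show that these two objects coincide and that the $\mathrm{EVI}_K$ refinement becomes available \emph{precisely} when the underlying first-order calculus is Hilbertian.

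For the forward direction ($\rcd \Rightarrow$ existence of $\mathrm{EVI}_K$-flows), I would first record that $\cd$ is exactly the $K$-geodesic convexity of $\ent{\mm}$ along $W_2$-geodesics, i.e.\ inequality \eqref{eq:defCD infty}. Since $\mathrm{Ch}$ is now a quadratic Dirichlet form, the heat flow $(P_t)$ is linear and Markovian, and the exponential growth condition guarantees that $P_t$ maps $\mathcal{P}_2(X,\ds)$ into itself with finite entropy. I would then invoke a Kuwada-type duality relating the $L^2$-dissipation of $\mathrm{Ch}$ along $(P_t)$ to the metric speed and descending slope of $t\mapsto P_t\mu$ in $(\mathcal{P}_2,W_2)$; combined with the energy dissipation identity, this identifies the heat flow as a \emph{curve of maximal slope} for $\ent{\mm}$, hence as its metric gradient flow. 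The remaining and most delicate step is to upgrade this curve of maximal slope to a genuine $\mathrm{EVI}_K$-flow: here the linearity of $(P_t)$ is indispensable, since it yields the Bakry–Émery gradient estimate ($\be$ in weak form) $|\nabla P_t f|^2 \le e^{-2Kt}\,P_t|\nabla f|^2$, which, through the regularizing properties of the heat flow and the differentiation of $t\mapsto W_2^2(P_t\mu,\sigma)$, produces the inequality of Definition~\ref{def:EVI}.

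For the reverse direction (existence of $\mathrm{EVI}_K$-flows $\Rightarrow \rcd$), two things must be extracted. First, by the result of Daneri and Savaré cited in the text, the existence of $\mathrm{EVI}_K$-gradient flows of a functional forces that functional to be $K$-geodesically convex; applied to $\ent{\mm}$ this gives \eqref{eq:defCD infty}, hence $\cd$. Second, $\mathrm{EVI}_K$-flows are automatically unique and satisfy the $K$-contraction estimate $W_2(\mu_t,\nu_t)\le e^{-Kt}W_2(\mu_0,\nu_0)$; this contraction, together with the identification of the heat flow with the $W_2$-gradient flow of the entropy, forces the heat semigroup to be linear. Linearity of $(P_t)$ is equivalent to the parallelogram identity for $\mathrm{Ch}$, that is, to infinitesimal Hilbertianity, and together with $\cd$ this is exactly $\rcd$.

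The main obstacle I anticipate is the forward direction's final step — converting a curve of maximal slope into an $\mathrm{EVI}_K$-flow. A curve of maximal slope is a purely energetic, one-sided notion and is available already under $\cd$ alone (even for Finsler structures), whereas $\mathrm{EVI}_K$ is a two-sided, rigidly Riemannian statement. Bridging this gap requires the fine interplay between the $K$-convexity of the entropy and the Hilbertian first-order differential structure encoded in the gradient estimate above; this is precisely the point at which Finsler spaces, which satisfy $\cd$ but have nonlinear heat flow, are excluded.
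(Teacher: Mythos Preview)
The paper does not supply its own proof of this theorem: it is stated purely as a citation to \cite{AGS-M} and \cite{AGMR-R}, with no argument given in the text. So there is nothing in the paper to compare your proposal against.

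That said, your sketch is broadly faithful to the strategy actually used in the cited references. The forward direction is indeed the hard one, and the ingredients you list---identification of the $L^2$-heat flow with the Wasserstein gradient flow of the entropy under $\cd$, and then the use of the Bakry--\'Emery gradient estimate (available thanks to the quadratic Cheeger energy) to upgrade the curve of maximal slope to an $\mathrm{EVI}_K$-flow---are exactly the steps in \cite{AGS-M}. Your identification of the ``obstacle'' is also accurate: passing from maximal slope to $\mathrm{EVI}_K$ is precisely where the infinitesimal Hilbertianity enters, and this is where the actual technical work lies.

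One point in your reverse direction deserves sharpening. You write that the $K$-contraction of $\mathrm{EVI}_K$-flows, together with the identification with the heat flow, ``forces the heat semigroup to be linear.'' Contraction alone does not obviously yield linearity. The argument in \cite{AGS-M} is rather that the $\mathrm{EVI}_K$-semigroup is \emph{additive} in the initial datum: one checks that if $\mu_t,\nu_t$ are $\mathrm{EVI}_K$-flows then so is $\tfrac12(\mu_t+\nu_t)$, using joint convexity of $W_2^2$ and convexity of $\ent{\mm}$ along \emph{linear} (not geodesic) interpolations, together with \emph{uniqueness} of $\mathrm{EVI}_K$-flows. This additivity, transferred to the $L^2$-heat semigroup via the identification, gives linearity of $P_t$ and hence the parallelogram law for $\mathrm{Ch}$. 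So the key input is uniqueness plus convexity, not contraction per se; you may want to adjust that sentence.
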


\begin{remark}
There is another well known synthetic curvature-dimension condition, called Bakry-{\' E}mery condition ({\rm BE} condition for short), named after Bakry and {\' E}mery \cite{BEdiffusions} (see \cite{LedouxICM, BakryGentilLedoux14} for more discussion on this topic). In \cite{AGS-B},  Ambrosio, Gigli and Savar{\'e} proved that BE condition,   under some natural hypothesis,  implies the existence of ${\rm EVI}_K$-gradient flows,  which then implies 
${\rm RCD}$ condition. Conversely, in \cite{AGS-M}, they show that $\rm BE$ condition could be deduced from {\rm RCD} condition.  
\end{remark}

	\begin{example}
		The existence of the  ${\rm EVI}_K$-gradient flow of a $K$-convex function $E$,  is valid on Euclidean spaces, Hilbert spaces, Alexandrov spaces, ${\rm CAT}$ spaces; see \cite{AGS-G} and \cite[Section 3.4]{muratori2020gradient}. 
	\end{example}

	\begin{example}
		For the Wasserstien space $(\mathcal{P}_2(X,\ds),W_2)$ over metric measure spaces, the existence of the  ${\rm EVI}_K$-gradient flow is valid,  when $X$ is an Euclidean space, a smooth Riemannian manifold with uniform Ricci lower bound, an Alexandrov space with  curvature bounded from below or an $\rm RCD$ space; see  also \cite{AGS-M} and \cite{GKO-H}.
	\end{example}
	
	\begin{example}
		The existence of ${\rm EVI}_K$-gradient flows also holds on the Wasserstien space $(\mathcal{P}(X),W_2)$ (we denote the Wasserstein space over an extended metric space  by $(\mathcal{P}(X),W_2)$ since $\mathcal{P}_2(X, \d)$ makes no sense) over extended metric measure spaces, such as a Wiener space \cite{FSS10} and configuration spaces \cite{EH-Configure},  see \cite[Section 13]{ambrosio2016optimal} for more discussion. We will discuss its connection with Wasserstein barycenter  problem in Section \ref{barycenter}.
	\end{example}
\section{Wasserstein barycenter}\label{barycenter}
In this section we review some recent results obtained by the authors in \cite{HLZ-BC1} concerning Wasserstein barycenter problem.

We say that  $(Y,\ds_Y)$ is an extended metric space if $\ds_Y : Y \times Y \to  [0, +\infty]$ is a symmetric function satisfying the triangle inequality, with $\ds_Y(x, y) = 0$ if and only if $x = y$.
Let $\Omega$  be a Borel probability measure  on $(Y,\ds_Y)$.  A   minimizer of the function $$x\mapsto\int_{Y} \ds_Y^2(x,y)\,\d \Omega(y)$$ is called a barycenter of $\Omega$,  and the variance of $\Omega$ is defined as
\[
{\rm Var}(\Omega):=\inf_{x\in X} \int_{Y} \ds_Y^2(x,y)\,\d \Omega(y).
\]  

In the Wasserstein space $(\mathcal{P}_2(X,\ds),W_2)$, a barycenter of $\Omega$ is also called Wasserstein barycenter.  This kind of problems, called Wasserstein barycenter problems,  draw particular interests in optimal transport and its related fields, as it gives a natural  way to  average several measures. We remark that the barycenter problem is independent of the linearity of the underling space,  and is essentially a non-linear problem.

In  Euclidean space,  Agueh--Carlier \cite{AguehCarlier}  established the well-posedness (i.e. existence, uniqueness and the absolute continuity) of the Wasserstein barycenter problem and prove the following Jensen's inequality.
\begin{theorem}[\cite{AguehCarlier}]
	In $(\mathbb{R}^n, |\cdot|,\mathcal{L}^n)$, the Boltzmann entropy satisfies the Wasserstein Jensen's inequality:
	\begin{equation}
		{\rm Ent}_{\mathcal{L}^n}(\bar{\mu})\leq \int_{\mathcal{P}_2(\mathbb{R}^n, |\cdot|)} {\rm Ent}_{\mathcal{L}^n}(\mu)\,\d\Omega(\mu)
	\end{equation}
	where $\Omega$ is supported in finitely many measures over $\mathcal{P}_2(\mathbb{R}^n,|\cdot|)$ with finite variance and $\bar{\mu}$ is a  barycenter of $\Omega$.
\end{theorem}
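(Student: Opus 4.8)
The plan is to reduce the statement to a convexity property of the logarithmic determinant, following the displacement-convexity philosophy behind McCann's theorem. Since $\Omega$ is supported on finitely many measures, I write $\Omega=\sum_{i=1}^{m}\lambda_i\delta_{\mu_i}$ with $\lambda_i>0$ and $\sum_i\lambda_i=1$, so that the right-hand side equals $\sum_{i=1}^{m}\lambda_i\,{\rm Ent}_{\mathcal{L}^n}(\mu_i)$ and the barycenter $\bar\mu$ minimizes $\nu\mapsto\sum_i\lambda_i W_2^2(\nu,\mu_i)$. If some $\mu_i$ fails to be absolutely continuous or has infinite entropy the right-hand side is $+\infty$ (recall that on $\mathcal{P}_2(\mathbb{R}^n,|\cdot|)$ the entropy is bounded below by a multiple of the second moment), so the inequality is trivial; hence I may assume every $\mu_i=\rho_i\mathcal{L}^n$ is absolutely continuous with finite entropy. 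Invoking the well-posedness of Agueh--Carlier, the barycenter $\bar\mu=\bar\rho\,\mathcal{L}^n$ is then absolutely continuous, and by Brenier's theorem there are convex functions $\phi_i$ with $T_i:=\nabla\phi_i$ the optimal map pushing $\bar\mu$ forward to $\mu_i$.

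Second, I would establish the first-order optimality condition
$$
\sum_{i=1}^{m}\lambda_i T_i(x)=x\qquad\text{for }\bar\mu\text{-a.e. }x.
$$
This follows from the first variation of the Wasserstein functional: perturbing $\bar\mu$ along the flow $(\Id+\varepsilon\xi)_\#\bar\mu$ of a smooth compactly supported vector field $\xi$, the derivative of $\tfrac12 W_2^2(\cdot,\mu_i)$ at $\bar\mu$ equals $\int\la x-T_i(x),\xi(x)\ra\,\d\bar\mu(x)$, so stationarity of $\sum_i\lambda_i W_2^2(\cdot,\mu_i)$ forces $\sum_i\lambda_i\big(x-T_i(x)\big)=0$ for $\bar\mu$-a.e. $x$.

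Third, I would apply the change-of-variables (Monge--Amp\`ere) formula for Brenier maps. By Alexandrov's theorem each convex $\phi_i$ has an a.e. Hessian $D^2\phi_i=DT_i\ge 0$, and McCann's results (as in \cite{mccann1997convexity}) give $\bar\rho(x)=\rho_i(T_i(x))\,\det DT_i(x)$ for $\bar\mu$-a.e. $x$. Substituting into the entropy of $\mu_i$ and changing variables yields
$$
{\rm Ent}_{\mathcal{L}^n}(\mu_i)={\rm Ent}_{\mathcal{L}^n}(\bar\mu)-\int\bar\rho(x)\log\det DT_i(x)\,\d x,
$$
and taking the convex combination over $i$,
$$
\sum_{i=1}^{m}\lambda_i\,{\rm Ent}_{\mathcal{L}^n}(\mu_i)-{\rm Ent}_{\mathcal{L}^n}(\bar\mu)=-\int\bar\rho(x)\sum_{i=1}^{m}\lambda_i\log\det DT_i(x)\,\d x.
$$

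Finally, I would conclude via the concavity of $A\mapsto\log\det A$ on positive semidefinite matrices. Differentiating the optimality condition gives $\sum_i\lambda_i DT_i(x)=I$ for $\bar\mu$-a.e. $x$, so Jensen's inequality for the concave map $\log\det$ gives
$$
\sum_{i=1}^{m}\lambda_i\log\det DT_i(x)\le\log\det\Big(\sum_{i=1}^{m}\lambda_i DT_i(x)\Big)=\log\det I=0.
$$
Since $\bar\rho\ge 0$, the displayed integral is nonnegative, which is exactly the claimed Jensen inequality. The main obstacle is the rigorous justification of the third step: Brenier maps are only a.e.\ differentiable and the Monge--Amp\`ere identity holds only in the Alexandrov sense, so one must control the integrability of $\log\det DT_i$ (using finiteness of the entropies) and treat the set where $\det DT_i$ vanishes; moreover, passing from the a.e.\ relation $\sum_i\lambda_i\nabla\phi_i=\Id$ to $\sum_i\lambda_i DT_i=I$ requires differentiating a.e.\ convex functions on a common full-measure set of second-order differentiability. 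By comparison, the concavity of $\log\det$ is elementary.
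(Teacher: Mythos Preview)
Your argument is essentially the original Agueh--Carlier proof and is correct in outline; the obstacles you flag (a common full-measure set of Alexandrov second differentiability, passing from $\sum_i\lambda_i\nabla\phi_i=\Id$ to $\sum_i\lambda_i D^2_A\phi_i=I$ by noting that the singular parts of the distributional Hessians are nonnegative and must cancel, and integrability of $\log\det DT_i$) are genuine but standard once one invokes the McCann change-of-variables machinery.

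The paper, however, does not reprove the theorem along these lines: it quotes the statement from \cite{AguehCarlier} and then develops an entirely different route that recovers it as a special case. Theorem~\ref{EVI JI} shows abstractly that the existence of an ${\rm EVI}_K$ gradient flow of a lower semicontinuous functional $E$ starting from a barycenter forces Jensen's inequality for $E$; the proof is a three-line integration of the integral ${\rm EVI}$ estimate (Proposition~\ref{integral version}) against $\Omega$, followed by $t\to 0$. Applied with $(Y,\ds_Y)=(\mathcal P_2(\mathbb R^n),W_2)$, $E={\rm Ent}_{\mathcal L^n}$ and $K=0$ --- the heat flow being the ${\rm EVI}_0$ flow of entropy by \cite{AGS-G,daneri2008eulerian} --- this yields the theorem without Brenier maps, the balance condition $\sum_i\lambda_iT_i=\Id$, or any $\log\det$ computation.

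The trade-off is clear. Your direct argument exposes the fine structure of the barycenter (it is pushed to each $\mu_i$ by a gradient of a convex function, and these gradients average to the identity), which is valuable for regularity questions and multi-marginal transport. The paper's ${\rm EVI}$ argument is softer and gives no such structural information, but it is vastly more portable: it requires only the existence of a gradient flow and therefore transfers verbatim to ${\rm RCD}(K,\infty)$ spaces, abstract Wiener spaces, and configuration spaces (Corollaries~\ref{coro:rcd} and~\ref{coro:wn}), settings where no Brenier calculus or Monge--Amp\`ere identity is available.
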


Moreover,  Kim--Pass \cite{KimPassAIM} extended the above  results to compact Riemannian manifolds;  Ma \cite{ma2023} improves Kim--Pass's results in  Riemannian manifolds when $\Omega$ is general distributions (with infinite support); Jiang \cite{JiangWasserstein}  proved the well-posedness in Alexandrov spaces. {\bf As a consequence}, the following generalized Wasserstein Jensen's inequality has been proved, which characterizes the convexity of certain functionals via Wasserstein barycenter.

Let  $\mathcal F: \pr_2(X, \ds) \to \R \cup \{+\infty\}$  be a $K$-displacement convex functional  in the sense of McCann \cite{mccann1997convexity},     $\Omega$ be a probability measure on $\mathcal{P}_2(X, \ds)$. We say that  Wasserstein Jensen's inequality holds if there is a  barycenter $\bar{\mu}$ of $\Omega$ such that
\begin{equation}\label{jensen}
	\mathcal F(\bar{\mu})\leq \int_{\mathcal{P}_2(X, \ds)}\mathcal F(\mu)\,\d\Omega(\mu)-\frac{K}{2}\int_{\mathcal{P}_2(X, \ds)}W_2^2(\bar{\mu},\mu)\,\d\Omega(\mu).\tag{WJI}
\end{equation}

The above inequality can be understood as a generalization of  ``displacement convexity'',  from the perspective of Wasserstein barycenter. Motivated by the synthetic theory of curvature-dimension condition, it is natural to ask:
\begin{quote}
 \emph{Does the Wasserstein Jensen's inequality hold on $\rm RCD$ and $\rm CD$ spaces?}
 \end{quote}

One may try to mimic the argument  used for $\mathbb{R}^n$ or Riemannian manifolds: firstly prove the  regularity of  Wasserstein barycenter, then prove the Jensen's  inequality using the  first order balance condition.   A huge barrier lies in the lack of sectional curvature bound, which plays a crucial role in deriving the regularity of Wasserstein barycenter. A new feature in our work is that we deal with  Wasserstein barycenter  in a synthetic approach, by treating Jensen's inequality as an ``{\bf a priori estimate}", so that we do not need to prove the regularity of Wasserstein barycenter at first.  In fact,   the Wasserstein Jensen's inequality  already encodes  the regularity of barycenter. To see this,  applying $\mathcal{F}={\rm Ent}_{\mm}$ in \eqref{jensen}, we get
\begin{equation}
	{\rm Ent}_{\mm}(\bar{\mu})\leq \int_{\mathcal{P}_2(X, \ds)}{\rm Ent}_{\mm}(\mu)\,\d\Omega(\mu)-\frac{K}{2}\int_{\mathcal{P}_2(X, \ds)}W_2^2(\bar{\mu},\mu)\,\d\Omega(\mu).
\end{equation}
So  $	{\rm Ent}_{\mm}(\bar{\mu})<+\infty$ in  case  that $\int_{\mathcal{P}_2(X, \ds)}{\rm Ent}_{\mm}(\mu)\,\d\Omega(\mu)<+\infty$ and $\Omega$ has finite variance.

\medskip

Motivated by Daneri and Savar{\'e} \cite{daneri2008eulerian}, we realize that the existence of ${\rm EVI}_K$ gradient flow implies the Jensen's inequality.

\begin{theorem}[${\rm EVI}_K$ implies Jensen's inequality]\label{EVI JI}
	Let $(Y,\ds_Y)$ be an extended metric space,  $K\in \R$,  $E$ be as in Definition \ref{def:EVI} and $\mu$ be a  probability measure  over $Y$ with finite variance.  Let $\epsilon\geq 0.$ Assume that ${\rm EVI}_K$ gradient flow of $E$ exists for some initial data $y\in Y$  satisfying
	\begin{equation}\label{nearly barycenter}
		\int_Y \ds_Y^2(y,z)\,\d\mu(z)\leq {\rm Var}(\mu)+\epsilon.
	\end{equation}
	Then the following inequality holds:
	
	\begin{equation}\label{NearlyJI}
		E(y_t)\leq \int_{Y} E(z)\,\d\mu(z)-\frac{K}{2}{\rm Var}(\mu)+\frac{\epsilon}{2I_K (t)},
	\end{equation}
	where $\{y_t\}$ is the ${\rm EVI}_K$ gradient flow of E  starting from $y$ and $I_K(t)$:=$\int_{0}^{t}e^{Kr}\,\d r$. In particular, if   $\mu$ has a barycenter $\bar{y}$ and there is  an ${\rm EVI}_K$ gradient flow of $E$ starting from $\bar y$, then  we have the Jensen's inequality:
	\begin{equation}\label{JI}
		E(\bar{y})\leq \int_{Y} E(z)\,\d\mu(z)-\frac{K}{2}\int_{Y}\ds^2(\bar{y},z)\,\d\mu(z).
	\end{equation}
\end{theorem}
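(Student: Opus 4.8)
The plan is to derive from Definition \ref{def:EVI} an integrated, exponentially weighted form of the {\rm EVI}$_K$ inequality, apply it pointwise in the second variable $z$, and then integrate against $\mu$. Fix $t>0$. First I would take any $z\in\De(E)$ with $\ds_Y(z,y_t)<\infty$, multiply the {\rm EVI}$_K$ inequality by $e^{Kr}$, recognise the left-hand side as $\tfrac12\frac{\d}{\d r}\bigl(e^{Kr}\ds_Y^2(y_r,z)\bigr)$, and integrate on $(0,t)$, using that $r\mapsto\ds_Y^2(y_r,z)$ is locally absolutely continuous so the fundamental theorem of calculus applies. Invoking the standard monotonicity of the energy along {\rm EVI}$_K$ flows (so $E(y_r)\geq E(y_t)$ for $r\le t$, whence $\int_0^t e^{Kr}E(y_r)\,\d r\geq I_K(t)E(y_t)$), this yields
\[
I_K(t)\,E(y_t)+\tfrac12 e^{Kt}\ds_Y^2(y_t,z)\leq I_K(t)\,E(z)+\tfrac12 \ds_Y^2(y,z),
\]
valid for all $z$: it is trivial when $E(z)=+\infty$, and also when $\ds_Y(z,y_t)=+\infty$, since in that case $\ds_Y(z,y)=+\infty$ as the flow stays at finite distance from $y=y_0$.

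Next I would integrate this inequality in $z$ against $\mu$. The finite-variance hypothesis guarantees $\ds_Y(y,z)<\infty$ for $\mu$-a.e. $z$, hence $\ds_Y(y_t,z)<\infty$ $\mu$-a.e., so the genuine bound applies almost everywhere and all integrals are well defined. On the left I would bound $\int_Y \ds_Y^2(y_t,z)\,\d\mu(z)\geq {\rm Var}(\mu)$ directly from the definition of the variance as an infimum, and on the right I would use the hypothesis \eqref{nearly barycenter}, namely $\int_Y \ds_Y^2(y,z)\,\d\mu(z)\leq {\rm Var}(\mu)+\epsilon$. Dividing by $I_K(t)>0$ then gives
\[
E(y_t)\leq \int_Y E(z)\,\d\mu(z)+\frac{1-e^{Kt}}{2I_K(t)}\,{\rm Var}(\mu)+\frac{\epsilon}{2I_K(t)}.
\]
The key algebraic simplification is that $I_K(t)=(e^{Kt}-1)/K$ gives $\frac{1-e^{Kt}}{2I_K(t)}=-\frac K2$ (and $=0=-\tfrac K2$ when $K=0$), which is exactly the coefficient in \eqref{NearlyJI}; this proves the first assertion.

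For the ``in particular'' statement I would specialise to $y=\bar y$ and $\epsilon=0$: since $\bar y$ is a barycenter, $\int_Y \ds_Y^2(\bar y,z)\,\d\mu(z)={\rm Var}(\mu)$, so \eqref{nearly barycenter} holds with $\epsilon=0$ and the flow starts at $y_0=\bar y$. Then \eqref{NearlyJI} reads $E(y_t)\leq \int_Y E\,\d\mu-\frac K2{\rm Var}(\mu)$ for every $t>0$, with no error term. Letting $t\to 0^+$, the flow satisfies $y_t\to\bar y$ in $\ds_Y$, so lower semicontinuity of $E$ yields $E(\bar y)\leq \liminf_{t\to 0^+}E(y_t)\leq \int_Y E\,\d\mu-\frac K2{\rm Var}(\mu)$, which is \eqref{JI} after writing ${\rm Var}(\mu)=\int_Y \ds_Y^2(\bar y,z)\,\d\mu(z)$.

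The main obstacle is conceptual rather than computational: one must resist discarding the nonnegative term $\tfrac12 e^{Kt}\ds_Y^2(y_t,z)$ after integrating the {\rm EVI}$_K$ inequality, and instead retain it and feed in the variance lower bound $\int\ds_Y^2(y_t,\cdot)\,\d\mu\geq{\rm Var}(\mu)$; it is precisely this term, paired with the identity $\frac{1-e^{Kt}}{2I_K(t)}=-\frac K2$, that produces the sharp curvature factor $-\frac K2{\rm Var}(\mu)$ rather than a weaker $t$-dependent bound. The only imported ingredient is the monotonicity of $E$ along {\rm EVI}$_K$ flows; the remaining care is routine: $E$ is Borel by lower semicontinuity, the distance integrals are finite by the finite-variance assumption, and the case $\int_Y E\,\d\mu=+\infty$ is trivial.
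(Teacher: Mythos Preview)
Your proof is correct and follows essentially the same route as the paper: obtain the integrated inequality $\tfrac{e^{Kt}}{2}\ds_Y^2(y_t,z)-\tfrac12\ds_Y^2(y,z)\le I_K(t)\bigl(E(z)-E(y_t)\bigr)$, integrate in $z$ against $\mu$, feed in ${\rm Var}(\mu)\le\int\ds_Y^2(y_t,\cdot)\,\d\mu$ and \eqref{nearly barycenter}, divide by $I_K(t)$, and for \eqref{JI} let $t\to0$ and use lower semicontinuity. The only cosmetic difference is that the paper quotes the integral form of ${\rm EVI}_K$ as Proposition~\ref{integral version} (from \cite{daneri2008eulerian}), whereas you re-derive it inline via the exponential integrating factor and the monotonicity of $E$ along the flow.
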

The following proposition,   proved in \cite[Proposition 3.1]{daneri2008eulerian} (see also \cite{ambrosio2016optimal}),  provides an integral version of ${\rm EVI}_K$ type gradient flows. We will use this formulation to prove Jensen's inequality.
\begin{proposition}[Integral version of ${\rm EVI}_K$]\label{integral version}
	Let $E$, $K$ and $\{y_t\}_{t>0}$ be as in Definition \ref{def:EVI}, then $\{y_t\}_{t>0}$ is an ${\rm EVI}_{K}$ gradient flow if and only if it satisfies 
	\begin{equation}\label{eq:integral version}
		\frac{e^{K(t-s)}}{2}\ds_Y^2(y_t,z)-\frac{1}{2}\ds_Y^2(y_s,z)\leq I_K(t-s)\big(E(z)-E(y_t)\big),\quad \forall ~ 0\leq s\leq t,
	\end{equation}
	where $I_K(t)$:=$\int_{0}^{t}e^{Kr}\,\d r$.
\end{proposition}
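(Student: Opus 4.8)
The plan is to establish the two implications of the equivalence separately; the heart of each is an elementary integrating-factor manipulation, and the only genuinely non-elementary ingredient is the dissipation (monotonicity) of the energy along the flow.

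For the implication from Definition \ref{def:EVI} to \eqref{eq:integral version}, I would fix a test point $z\in\De(E)$ with $\ds_Y(z,y_r)<\infty$ and set $\phi(r):=\ds_Y^2(y_r,z)$. Since $r\mapsto y_r$ is locally absolutely continuous and $\ds_Y(\cdot,z)$ is $1$-Lipschitz on the finite-distance component of $z$, the function $\phi$ is locally absolutely continuous, hence a.e.\ differentiable and equal to the integral of its derivative. The defining inequality reads $\tfrac12\phi'(r)+\tfrac{K}{2}\phi(r)\le E(z)-E(y_r)$ for a.e.\ $r$; multiplying by the positive factor $2e^{Kr}$ converts the left-hand side into a total derivative, so that $\frac{\d}{\d r}\big(e^{Kr}\phi(r)\big)\le 2e^{Kr}\big(E(z)-E(y_r)\big)$. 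Integrating over $[s,t]$ and then using that $r\mapsto E(y_r)$ is non-increasing (so $E(z)-E(y_r)\le E(z)-E(y_t)$ for $r\le t$) bounds the result by $2\big(E(z)-E(y_t)\big)\int_s^t e^{Kr}\,\d r=2\big(E(z)-E(y_t)\big)e^{Ks}I_K(t-s)$; dividing by $2e^{Ks}$ gives exactly \eqref{eq:integral version}.

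For the converse, I would start from \eqref{eq:integral version}, divide by $(t-s)>0$, and let $s\uparrow t$. On the right, $I_K(t-s)/(t-s)\to 1$, so the right-hand side tends to $E(z)-E(y_t)$. On the left, writing $s=t-h$ and expanding $e^{Kh}=1+Kh+o(h)$, the difference quotient splits as $\frac{e^{Kh}-1}{2h}\phi(t)+\frac{\phi(t)-\phi(t-h)}{2h}$, which converges to $\tfrac{K}{2}\phi(t)+\tfrac12\phi'(t)$ at every $t$ where $\phi$ is differentiable, i.e.\ for a.e.\ $t$ by local absolute continuity. This recovers the pointwise inequality of Definition \ref{def:EVI}.

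The integrating factor and the one-sided limit are routine, so \textbf{the main obstacle is the monotonicity of $r\mapsto E(y_r)$} invoked in the first implication. I would establish it as a regularity property of the flow rather than assume it: integrating the differential inequality with the test point $z=y_s$ (for $s>0$, where $y_s\in\De(E)$) yields $e^{Kt}\ds_Y^2(y_t,y_s)\le 2\int_s^t e^{Kr}\big(E(y_s)-E(y_r)\big)\,\d r$, whose non-negative left-hand side forces the $e^{Kr}$-weighted average of $E(y_r)$ on $[s,t]$ to stay below $E(y_s)$; together with the lower semicontinuity of $E$ and a limiting argument, this gives that $r\mapsto E(y_r)$ is non-increasing. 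Throughout I would keep to test points at finite distance from the curve, as in the hypothesis of Definition \ref{def:EVI}, so that $\phi$ is finite-valued and every manipulation is legitimate in the extended metric setting.
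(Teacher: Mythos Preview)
The paper does not give its own proof of this proposition; it simply quotes it from \cite[Proposition~3.1]{daneri2008eulerian} (with a pointer to \cite{ambrosio2016optimal}). Your integrating-factor argument for both implications is exactly the standard one found in those references, and it is correct. The only place that deserves a word more is the monotonicity step: the inequality $\int_s^t e^{Kr}E(y_r)\,\d r\le E(y_s)\int_s^t e^{Kr}\,\d r$ for \emph{every} pair $s<t$, combined with the lower semicontinuity of $r\mapsto E(y_r)$ (from lsc of $E$ and continuity of the curve), does force $E(y_\cdot)$ to be non-increasing --- one argues by contradiction using that the sublevel sets $\{r:E(y_r)\le c\}$ are closed --- but this should be spelled out rather than left as ``a limiting argument''; alternatively you can just cite the energy-dissipation property of ${\rm EVI}_K$ flows from \cite{daneri2008eulerian} or \cite{muratori2020gradient}.
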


\bigskip

\begin{proof}[Proof of Theorem \ref{EVI JI}]
	Integrating \eqref{eq:integral version} in $z$ with respect to  $\mu$ and choosing $s=0$,  we get
	\begin{equation}\label{EVI JI1}
		\frac{e^{Kt}}{2}\int_{Y}\ds_Y^2(y_t,z)\,\d\mu(z)-\frac{1}{2}\int_{Y}\ds_Y^2(y,z)\,\d\mu(z)\leq I_K(t)\left(\int_{Y} E(z)\,\d\mu(z)-E(y_t)\right).
	\end{equation}
From the definition of barycenter, we can see that  $\int_{Y}\ds_Y^2(y_t,z)\,\d\mu(z)\geq{\rm Var}(\mu)$. Combining with  \eqref{nearly barycenter} and \eqref{EVI JI1}, we get
	\begin{equation}\label{EVI JI2}
		\frac{e^{Kt}-1}{2}{\rm Var}(\mu)-\frac{\epsilon}{2}\leq I_K(t)\left(\int_{Y} E(z)\,\d\mu(z)-E(y_t)\right).
	\end{equation} 
	Dividing both sides of \eqref{EVI JI2} by $I_K(t)$, we get \eqref{NearlyJI}. In particular, if $y$ is a barycenter of $\mu$,  we can take $\epsilon=0$ in \eqref{NearlyJI}, so that
	$$
	E(y_t)\leq \int_{Y} E(z)\,\d\mu(z)-\frac{K}{2}{\rm Var}(\mu).
	$$
	Letting $t\rightarrow 0$, and recalling the lower semi-continuity of $E$ and $y_t\overset{\ds_Y}{\rightarrow} y_0$, we get Jensen's inequality \eqref{JI}.
\end{proof}
Combining Theorem \ref{rcdevi} and  Theorem \ref{EVI JI} we  establish  Wasserstein Jensen's inequality in   $\rm RCD$ spaces.

\begin{corollary}\label{coro:rcd}Let  $\ms$ be an $\rcd$ space.  Let  $\Omega$ be a  probability measure over $\mathcal{P}_2(X, \ds)$ with finite variance.  
	Then  for any barycenter $\bar{\mu}$ of $\Omega$,  it holds
	\begin{equation}
		{\rm Ent}_{\mm}(\bar{\mu})\leq \int_{\mathcal{P}_2(X, \ds)} {\rm Ent}_{\mm}(\mu)\,\d\Omega(\mu)-\frac{K}{2}\int_{\mathcal{P}_2(X, \ds)} W_2^2(\bar{\mu},\mu)\,\d\Omega(\mu).
	\end{equation}
\end{corollary}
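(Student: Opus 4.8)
The plan is to read Corollary \ref{coro:rcd} as the specialization of the abstract Jensen inequality of Theorem \ref{EVI JI} to the Wasserstein space over $\ms$ itself. Concretely, I would apply Theorem \ref{EVI JI} with the extended metric space $(Y,\ds_Y) = (\mathcal{P}_2(X,\ds), W_2)$, the functional $E = {\rm Ent}_{\mm}$, and the driving measure $\mu = \Omega$. Under this identification the abstract variance $\inf_{x}\int \ds_Y^2(x,z)\,\d\mu(z)$ becomes exactly the Wasserstein variance ${\rm Var}(\Omega)$, an abstract barycenter of $\mu$ is precisely a Wasserstein barycenter $\bar\mu$ of $\Omega$, and the abstract conclusion \eqref{JI} transcribes verbatim into the claimed inequality ${\rm Ent}_{\mm}(\bar\mu)\leq \int {\rm Ent}_{\mm}(\mu)\,\d\Omega(\mu) - \frac{K}{2}\int W_2^2(\bar\mu,\mu)\,\d\Omega(\mu)$.

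To invoke Theorem \ref{EVI JI} I would first check its hypotheses. First, $(\mathcal{P}_2(X,\ds),W_2)$ is an extended metric space. Second, $E = {\rm Ent}_{\mm}$ is lower semicontinuous with respect to $W_2$, a standard property of the relative entropy. Third, $\Omega$ has finite variance by assumption and, as posited in the corollary, admits a barycenter $\bar\mu$; since $\Omega$ is supported on $\mathcal{P}_2(X,\ds)$ and $\bar\mu$ is a minimizer over that space, we have $\bar\mu \in \mathcal{P}_2(X,\ds)$. The remaining and essential ingredient is the existence of an ${\rm EVI}_K$-gradient flow of ${\rm Ent}_{\mm}$ starting from $\bar\mu$, which is supplied by the forward implication of Theorem \ref{rcdevi}: because $\ms$ is an $\rcd$ space, such a flow exists starting from every element of $\mathcal{P}_2(X,\ds)$, in particular from $\bar\mu$.

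With these hypotheses in hand the ``in particular'' clause of Theorem \ref{EVI JI} applies directly and produces \eqref{JI}, which is the corollary. I would stress that no regularity or finiteness of $\bar\mu$ is assumed as input: following the proof of Theorem \ref{EVI JI}, the finiteness ${\rm Ent}_{\mm}(\bar\mu)<+\infty$ (whenever the right-hand side is finite) is itself an \emph{output} of the estimate, obtained by letting $t\to 0$ in \eqref{NearlyJI} and using only lower semicontinuity of the entropy and the $W_2$-convergence of the flow to its initial datum. This is precisely the ``a priori estimate'' mechanism emphasized earlier, bypassing any direct regularity theory for the barycenter.

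The one point deserving care, and the main (if modest) obstacle, is that the ${\rm EVI}_K$-flow must genuinely start from $\bar\mu$ even when ${\rm Ent}_{\mm}(\bar\mu) = +\infty$, a case formally outside the domain $\De({\rm Ent}_{\mm})$ in Definition \ref{def:EVI}. Here the full strength of Theorem \ref{rcdevi} is used: in the $\rcd$ setting the entropy flow (the heat flow) exists and instantaneously regularizes from any point of $\mathcal{P}_2(X,\ds)$, so that the flow takes finite-entropy values for $t>0$ while converging to $\bar\mu$ in $W_2$ as $t\to0$. Consequently the integral inequality \eqref{eq:integral version}, applied at $s=0$ and integrated against $\Omega$ exactly as in the proof of Theorem \ref{EVI JI}, never requires the finite value ${\rm Ent}_{\mm}(\bar\mu)$, and the limiting argument closes cleanly.
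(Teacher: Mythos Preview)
Your proposal is correct and follows exactly the route the paper indicates: the corollary is stated immediately after the sentence ``Combining Theorem~\ref{rcdevi} and Theorem~\ref{EVI JI} we establish Wasserstein Jensen's inequality in $\rm RCD$ spaces,'' with no further proof given, and your specialization $(Y,\ds_Y)=(\mathcal P_2(X,\ds),W_2)$, $E={\rm Ent}_\mm$, $\mu=\Omega$ is precisely that combination. Your additional remark about the flow starting from a possibly infinite-entropy barycenter is a valid point of care, and is covered by the phrasing of Theorem~\ref{rcdevi}, which guarantees the ${\rm EVI}_K$-flow from \emph{every} $\mu\in\mathcal P_2(X,\ds)$.
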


Furthermore, on some extended metric measure spaces, such as abstract Wiener spaces \cite{ambrosio2016optimal} and configuration spaces \cite{EH-Configure},  ${\rm EVI}_K$ gradient flow of the relative entropy is  well-posed in the sense of Definition \ref{def:EVI}. To apply Theorem \ref{EVI JI} for extended metric measure spaces, one should take care of  the existence of the gradient flow from a point  with finite distance to the domain of the relative entropy. In general,  this is a highly  non-trivial problem.

\begin{corollary}[See \cite{FU} and \cite{FSS10}]\label{coro:wn}Let  $(X,H,\gamma)$ be an abstract Wiener space equipped with the  Cameron--Martin distance  $\ds_H$ and the Gaussian measure $\gamma$.  Let  $\Omega$ be a  probability measure over $\mathcal{P}(X)$ with finite variance.  
	
	Then  for any barycenter $\bar{\mu}$ of $\Omega$ it holds
	\begin{equation}\label{eq:wn}
		{\rm Ent}_{\gamma}(\bar{\mu})\leq \int_{\mathcal{P}(X)} {\rm Ent}_{\gamma}(\mu)\,\d\Omega(\mu)-\frac{1}{2}\int_{\mathcal{P}(X)} W_2^2(\bar{\mu},\mu)\,\d\Omega(\mu).
	\end{equation}
	
	Furthermore, if  $\int_{\mathcal{P}(X)} {\rm Ent}_{\gamma}(\mu)\,\d\Omega(\mu)<+\infty$,  $\Omega$ has a unique barycenter.
\end{corollary}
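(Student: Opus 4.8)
The plan is to read \eqref{eq:wn} as the $K=1$ instance of the Wasserstein Jensen inequality and to obtain it from Theorem \ref{EVI JI} applied with $E={\rm Ent}_\gamma$. The structural fact I rely on is that the abstract Wiener space $(X,H,\gamma)$, endowed with the Cameron--Martin distance $\ds_H$, is the model extended metric measure space on which ${\rm Ent}_\gamma$ is $1$-geodesically convex and generates ${\rm EVI}_1$-gradient flows of Ornstein--Uhlenbeck type; this is the content of \cite{FSS10} (see also \cite{ambrosio2016optimal}), and in particular $(X,\ds_H,\gamma)$ is ${\rm RCD}(1,\infty)$. Since ${\rm Ent}_\gamma\ge 0$ and $\Omega$ has finite variance, the right-hand side of \eqref{eq:wn} equals $+\infty$ whenever $\int_{\mathcal{P}(X)}{\rm Ent}_\gamma(\mu)\,\d\Omega(\mu)=+\infty$, so the inequality is trivial in that case, and I may assume $\int_{\mathcal{P}(X)}{\rm Ent}_\gamma(\mu)\,\d\Omega(\mu)<+\infty$.

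Under this assumption $\Omega$-a.e.\ measure $\mu$ lies in $\De({\rm Ent}_\gamma)$, i.e.\ is absolutely continuous with finite entropy. As $\bar\mu$ is a barycenter, $\int_{\mathcal{P}(X)}W_2^2(\bar\mu,\mu)\,\d\Omega(\mu)={\rm Var}(\Omega)<+\infty$, so $W_2(\bar\mu,\mu)<+\infty$ for $\Omega$-a.e.\ $\mu$; hence $\bar\mu$ has finite $W_2$-distance to $\De({\rm Ent}_\gamma)$. The technical heart is then to produce the ${\rm EVI}_1$-gradient flow $\{\mu_t\}$ of ${\rm Ent}_\gamma$ issuing from $\bar\mu$, with $\mu_t\to\bar\mu$ in $W_2$ as $t\to 0$. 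Granting this, the Jensen part \eqref{JI} of Theorem \ref{EVI JI} (with $\epsilon=0$) yields \eqref{eq:wn} at once, and the lower semicontinuity of ${\rm Ent}_\gamma$ together with $\mu_t\to\bar\mu$ incidentally forces ${\rm Ent}_\gamma(\bar\mu)\le\liminf_{t\to0}{\rm Ent}_\gamma(\mu_t)<+\infty$.

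The main obstacle is precisely this existence-and-initial-value statement. In the Wiener space the ${\rm EVI}_1$ flow of ${\rm Ent}_\gamma$ is the Ornstein--Uhlenbeck evolution, which is smoothing, so that $\mu_t\in\De({\rm Ent}_\gamma)$ for every $t>0$; the genuine difficulty is continuity at the endpoint, namely $\mu_t\to\bar\mu$ in $W_2$ as $t\to0$, since $\bar\mu$ may a priori be singular while Definition \ref{def:EVI} only issues flows from the domain. I would resolve this through the generation theory of \cite{FSS10} and \cite{FU}: a well-posed ${\rm EVI}_1$ flow extends continuously to the $W_2$-closure of its domain within each finite-distance class, and one checks, using finite variance, that the smoothing trajectory attains $\bar\mu$ as its initial value, so the flow indeed starts from $\bar\mu$.

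For the uniqueness statement I assume $\int_{\mathcal{P}(X)}{\rm Ent}_\gamma(\mu)\,\d\Omega(\mu)<+\infty$. By the inequality just proved, every barycenter $\bar\mu$ satisfies ${\rm Ent}_\gamma(\bar\mu)<+\infty$ and is therefore absolutely continuous with respect to $\gamma$. The plan is to run the classical strict-convexity argument of Agueh--Carlier \cite{AguehCarlier} and McCann \cite{mccann1997convexity}, transplanted to the ${\rm RCD}(1,\infty)$ Wiener space: from an absolutely continuous measure the optimal transport in an ${\rm RCD}$ space is unique and non-branching, so $\nu\mapsto W_2^2(\nu,\mu)$ is strictly convex along the relevant (generalized) geodesics; were there two distinct barycenters $\bar\mu_0\ne\bar\mu_1$, interpolating between them would strictly lower $\int_{\mathcal{P}(X)}W_2^2(\cdot,\mu)\,\d\Omega(\mu)$ below ${\rm Var}(\Omega)$, contradicting minimality. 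The delicate point of this second part is the strict convexity and uniqueness of optimal plans from absolutely continuous data in the infinite-dimensional Wiener space, where Brenier's theorem is unavailable; this is exactly where the Wiener-space results of \cite{FU} and \cite{FSS10} enter.
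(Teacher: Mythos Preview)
Your proposal is correct and follows essentially the same route the paper indicates: deduce \eqref{eq:wn} as the $K=1$ case of Theorem \ref{EVI JI} using the well-posedness of ${\rm EVI}_1$-gradient flows of ${\rm Ent}_\gamma$ on the Wiener space from \cite{FU}, \cite{FSS10} (see also \cite{ambrosio2016optimal}), and obtain uniqueness from the resulting finiteness of ${\rm Ent}_\gamma(\bar\mu)$ together with the Brenier-type uniqueness of optimal transport from absolutely continuous measures in the Wiener space. The paper does not spell out a proof of this corollary beyond the paragraph preceding it, which flags precisely the one non-trivial point---existence of the ${\rm EVI}_1$ flow issuing from a (possibly singular) barycenter that merely lies at finite $W_2$-distance from $\De({\rm Ent}_\gamma)$---and you have correctly isolated and addressed this.
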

\begin{corollary}[See \cite{EH-Configure}]
	Let  $(\mathbf{Y},\ds_{\mathbf Y},\pi)$ be an extended metric measure space where  $\mathbf{Y}$ is the configuration space  over a Riemannian manifold  $M$,    $\ds_{\mathbf Y}$ is the  intrinsic metric  and $\pi$  is the Poisson  measure .   Assume that $M$ has Ricci curvature bounded from below by $K \in \R$.

	Then  for any   barycenter $\bar{\mu}$ of a probability measure  $\Omega$ over $\mathcal{P}(\mathbf{Y})$ with finite variance,  its holds
	\begin{equation*}
		{\rm Ent}_{\pi}(\bar{\mu})\leq \int_{\mathcal{P}(\mathbf{Y})} {\rm Ent}_{\pi}(\mu)\,\d\Omega(\mu)-\frac{K}{2}\int_{\mathcal{P}(\mathbf{Y})} W_2^2(\bar{\mu},\mu)\,\d\Omega(\mu).
	\end{equation*}
	
	Furthermore,  if $\int_{\mathcal{P}(\mathbf{Y})} {\rm Ent}_{\pi}(\mu)\,\d\Omega(\mu)<+\infty$,   $\Omega$ has a unique barycenter.
\end{corollary}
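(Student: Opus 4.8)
The plan is to reduce the statement to Theorem \ref{EVI JI}, following verbatim the scheme of the Wiener-space Corollary \ref{coro:wn}. I apply Theorem \ref{EVI JI} with the ambient extended metric space taken to be the Wasserstein space $(\mathcal{P}(\mathbf{Y}),W_2)$ over the configuration space, with functional $E={\rm Ent}_\pi$, and with the probability measure there being $\Omega$. Under this dictionary the conclusion \eqref{JI} of Theorem \ref{EVI JI} is exactly the asserted inequality, and the finiteness ${\rm Ent}_\pi(\bar\mu)<+\infty$ in the second assertion becomes an automatic by-product. We may assume $\int_{\mathcal{P}(\mathbf{Y})}{\rm Ent}_\pi\,\d\Omega<+\infty$, since otherwise the claimed inequality is trivial. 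Everything then rests on two inputs: the existence of a barycenter $\bar\mu$ of $\Omega$, and the existence of an ${\rm EVI}_K$ gradient flow of ${\rm Ent}_\pi$ in $(\mathcal{P}(\mathbf{Y}),W_2)$ that starts from $\bar\mu$.

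The geometric input is the theorem of Erbar--Huesmann \cite{EH-Configure}: when $M$ has Ricci curvature bounded below by $K$, the configuration space $(\mathbf{Y},\ds_{\mathbf Y},\pi)$ carries an ${\rm EVI}_K$-gradient-flow structure for the relative entropy, i.e. the configuration-space heat semigroup realizes an ${\rm EVI}_K$ flow of ${\rm Ent}_\pi$ in the Wasserstein space, for initial data in the closure $\overline{\De({\rm Ent}_\pi)}$. This is the configuration-space analogue of Theorem \ref{rcdevi}, and it supplies the flow required by Theorem \ref{EVI JI} as soon as we know the starting point lies in $\overline{\De({\rm Ent}_\pi)}$.

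The main obstacle is exactly the one flagged in the discussion preceding the corollaries: verifying that the flow may be initiated at the barycenter, that is, $\bar\mu\in\overline{\De({\rm Ent}_\pi)}$. Existence of a barycenter follows from the finite variance of $\Omega$ together with the lower semicontinuity and coercivity of the variance functional on $(\mathcal{P}(\mathbf{Y}),W_2)$ by the direct method. To place $\bar\mu$ in $\overline{\De({\rm Ent}_\pi)}$ I would argue in two steps: first, since $\int{\rm Ent}_\pi\,\d\Omega<+\infty$ the measure $\Omega$ charges $\De({\rm Ent}_\pi)$, and $\bar\mu$ has finite variance, so $\bar\mu$ has finite $W_2$-distance to $\De({\rm Ent}_\pi)$; second, within that finite-distance component the finite-entropy measures are $W_2$-dense, as one may regularize any measure by running it a short time under the heat semigroup, which approaches the identity in $W_2$ while rendering the entropy finite. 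Hence $\bar\mu\in\overline{\De({\rm Ent}_\pi)}$, the flow $\{y_t\}$ from $\bar\mu$ exists with $y_t\to\bar\mu$, and Theorem \ref{EVI JI} yields \eqref{JI}. Should the available existence theory cover only the interior $\De({\rm Ent}_\pi)$, I would instead invoke the quantitative form \eqref{NearlyJI}: take almost-barycenters $y^\epsilon\in\De({\rm Ent}_\pi)$ with $y^\epsilon\to\bar\mu$ satisfying \eqref{nearly barycenter}, run their flows, and pass to the limit along a diagonal $t=\sqrt\epsilon$ so that the error $\epsilon/(2I_K(t))\to 0$, using lower semicontinuity of ${\rm Ent}_\pi$ to recover \eqref{JI}.

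Finally, uniqueness under $\int{\rm Ent}_\pi\,\d\Omega<+\infty$ mirrors the Wiener case. The inequality just proved forces every barycenter to satisfy ${\rm Ent}_\pi(\bar\mu)<+\infty$, hence to be absolutely continuous with respect to $\pi$. I would then exploit the strict $K$-displacement convexity of ${\rm Ent}_\pi$ together with the infinitesimally Hilbertian, essentially non-branching structure of the configuration-space Wasserstein geometry recorded in \cite{EH-Configure}: two distinct absolutely continuous barycenters are joined by a unique $W_2$-geodesic along which the entropy is strictly convex, and a midpoint/strict-convexity comparison produces an absolutely continuous competitor of strictly smaller average cost, contradicting minimality of the variance. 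I expect the verification of $\bar\mu\in\overline{\De({\rm Ent}_\pi)}$ and the attendant flow-initiation to be the genuinely delicate point, the remainder being a transcription of the arguments for Corollary \ref{coro:rcd} and Corollary \ref{coro:wn}.
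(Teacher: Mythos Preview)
Your approach is exactly the one the paper indicates: this corollary is stated in the survey without an explicit proof, the implicit argument being to invoke Theorem~\ref{EVI JI} with the ${\rm EVI}_K$ gradient-flow structure for ${\rm Ent}_\pi$ on $(\mathcal P(\mathbf Y),W_2)$ supplied by Erbar--Huesmann \cite{EH-Configure}. You correctly isolate the flow-initiation at $\bar\mu\in\overline{\De({\rm Ent}_\pi)}$ as the delicate step, which the paper itself flags as ``highly non-trivial'' and defers to \cite{HLZ-BC1}; your proposed workaround via the approximate form \eqref{NearlyJI} with $y^\epsilon\in\De({\rm Ent}_\pi)$ is a sound contingency.

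One caution on the uniqueness sketch: strict $K$-displacement convexity of ${\rm Ent}_\pi$ along the geodesic joining two putative barycenters $\bar\mu_1\neq\bar\mu_2$ does not by itself force the midpoint to have strictly smaller \emph{variance}---it is the functional $\nu\mapsto\int W_2^2(\nu,\mu)\,\d\Omega(\mu)$, not the entropy, whose strict convexity along that geodesic you must establish, and these are different statements. The argument you want either passes through the (semi-)convexity of squared Wasserstein distance that the ${\rm EVI}_K$ structure entails, or uses that absolute continuity of $\bar\mu$ yields optimal transport maps and a first-order balance condition pinning down the barycenter; the entropy enters only to guarantee $\bar\mu\ll\pi$, not to drive the contradiction directly. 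The paper does not spell this out either, again deferring to \cite{HLZ-BC1}, so at the survey level your outline is acceptable, but the logical thread in that paragraph should be tightened.
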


\brk
Concerning general $\rm CD$ spaces,  it seems hard to prove  Wasserstein Jensen's inequality.  At the moment, we even do not know whether this is true for Finsler manifolds or not.
\erk
\section{Barycenter Curvature-Dimension condition}

In the previous sections,  we have seen the connection between displacement convexity, Wasserstein barycenter, Jensen's inequality and curvature-dimension condition of  metric measure spaces. This motivates us to propose a new curvature-dimension condition, which covers some non-trivial extended metric measure spaces, e.g. Wiener spaces and Configuration spaces over a Riemannian manifold.

Due to the special role of barycenter, we call this new curvature-dimension condition {{B}arycenter {C}urvature-{D}imension condition} and denote it by {\rm BCD} for short.

\begin{definition}[${\rm BCD}(K,\infty)$  condition]\label{def:bcd}
	
	Let $K \in \R$.  We say that an {extended metric measure  space} $(X,\ds,\mm)$ verifies ${\rm BCD}(K,\infty)$ condition,   if for {any}  probability measure $\Omega\in \pr_2(\pr(X), W_2)$,  concentrated on finitely many measures, there {exists} a barycenter $\bar{\mu}$ of  $\Omega$ such that the following Jensen's  inequality holds:
	\begin{equation}\label{eq:defBCD infty}
		{\rm Ent}_{\mm}(\bar{\mu})\leq\int_{\mathcal{P}(X)}{\rm Ent}_{\mm}(\mu)\,\d\Omega(\mu)-\frac{K}{2}{\rm Var}(\Omega).
	\end{equation}

\end{definition}
\begin{remark}
	If we take $\Omega=(1-t)\delta_{\mu_0}+t\delta_{\mu_1}$ and $(X, \ds)$ is a geodesic space,  BCD condition implies the Lott--Sturm--Villani's curvature-dimension condition. 
\end{remark}
\brk
Similar to ${\rm CD}(K, N)$ condition (cf. \cite{Lott-Villani09, S-O2}) and  ${\rm RCD}(K, N)$ condition (cf. \cite{EKS-O}), there is  a  finite dimensional version of $\rm BCD$ condition,  called ${\rm BCD}(K, N)$ condition; see  \cite[Section 6]{HLZ-BC1} for details.
\erk

Not surprisingly, we can prove that the family of compact  metric measure spaces satisfying barycenter curvature-dimension condition,  is closed in measured Gromov--Hausdorff topology. 

\begin{theorem}[Stability in measured Gromov--Hausdorff topology]
	Let $\left\{(X_i, \ds_i, \nu_i)\right\}_{i=1}^{\infty}$ be a sequence of compact ${\rm BCD}(K,\infty)$ metric measure spaces with $K\in\mathbb{R}$. If  $(X_i, \ds_i, \nu_i)$ converges to $(X, \ds, \nu)$ in the measured Gromov--Hausdorff sense as $n\rightarrow \infty$, then $(X, \ds, \nu)$ is also a ${\rm BCD}(K,\infty)$ space. 
\end{theorem}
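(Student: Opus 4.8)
The plan is to adapt the classical $\Gamma$-convergence proof of stability for the $\rm CD(K,\infty)$ condition to the barycenter setting. First I would fix an extrinsic realization of the measured Gromov--Hausdorff convergence: a compact metric space $(Z,\ds_Z)$ together with $\epsilon_i$-isometric embeddings ($\epsilon_i\to 0$) of $(X_i,\ds_i)$ and $(X,\ds)$ into $Z$, such that $\nu_i\to\nu$ weakly in $\mathcal{P}(Z)$ and the images of $X_i$ converge to the image of $X$ in Hausdorff distance. Working inside the compact space $\mathcal{P}(Z)$ lets me exploit weak compactness and the fact that $W_2$ built from $\ds_Z$ metrizes weak convergence, while the $\epsilon_i$-isometries guarantee that the intrinsic Wasserstein distances on $\mathcal{P}(X_i)$ and the $\ds_Z$-Wasserstein distances agree up to $o(1)$. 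Since $\Omega$ is concentrated on finitely many measures, I write $\Omega=\sum_{k=1}^{n}\lambda_k\delta_{\mu^k}$ with $\mu^k\in\mathcal{P}(X)$; I may assume every $\mu^k$ has finite entropy, for otherwise the right-hand side of the target inequality is $+\infty$ and any barycenter works, existence being guaranteed by lower semicontinuity of $x\mapsto\sum_k\lambda_k W_2^2(x,\mu^k)$ on the compact space $\mathcal{P}(X)$.

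The technical core is the construction of recovery sequences: for each $k$ I would produce $\mu^k_i\in\mathcal{P}(X_i)$ with $\mu^k_i\to\mu^k$ weakly and ${\rm Ent}_{\nu_i}(\mu^k_i)\to{\rm Ent}_{\nu}(\mu^k)$. This is where the measured Gromov--Hausdorff structure is genuinely used, and it parallels the recovery-sequence step in $\rm CD$ stability: writing $\mu^k=\rho^k\nu$, one approximates $\rho^k$ by bounded densities and transports mass through a near-optimal coupling between $\nu$ and $\nu_i$ on $Z$, producing densities against $\nu_i$ whose entropies converge. The matching lower bound ${\rm Ent}_{\nu}(\mu^k)\le\liminf_i{\rm Ent}_{\nu_i}(\mu^k_i)$ is the standard joint lower semicontinuity of relative entropy under joint weak convergence of $(\mu_i,\nu_i)$. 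I would then set $\Omega_i:=\sum_{k=1}^{n}\lambda_k\delta_{\mu^k_i}\in\mathcal{P}_2(\mathcal{P}(X_i),W_2)$ and apply the $\rm BCD(K,\infty)$ condition in $X_i$ to obtain barycenters $\bar\mu_i$ of $\Omega_i$ satisfying the Jensen inequality in $(X_i,\ds_i,\nu_i)$.

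Next I would extract a weakly convergent subsequence $\bar\mu_i\to\bar\mu$ in the compact space $\mathcal{P}(Z)$ and check, via the Hausdorff convergence $X_i\to X$ and the Portmanteau theorem, that $\bar\mu$ is concentrated on $X$. To identify $\bar\mu$ as a barycenter of $\Omega$ I would prove ${\rm Var}(\Omega_i)\to{\rm Var}(\Omega)$ together with optimality of $\bar\mu$ by a two-sided estimate: approximating an arbitrary competitor $x\in\mathcal{P}(X)$ by some $x_i\in\mathcal{P}(X_i)$ gives $\limsup_i{\rm Var}(\Omega_i)\le{\rm Var}(\Omega)$, while lower semicontinuity of $\mu\mapsto\sum_k\lambda_k W_2^2(\mu,\mu^k_i)$ along $\bar\mu_i\to\bar\mu$ (using the $\epsilon_i$-isometries and $\mu^k_i\to\mu^k$) gives $\sum_k\lambda_k W_2^2(\bar\mu,\mu^k)\le\liminf_i{\rm Var}(\Omega_i)$. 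Since $\bar\mu\in\mathcal{P}(X)$ forces $\sum_k\lambda_k W_2^2(\bar\mu,\mu^k)\ge{\rm Var}(\Omega)$, the chain of inequalities collapses to equalities, so $\bar\mu$ is a barycenter of $\Omega$ and the variances converge.

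Finally I would pass the Jensen inequality to the limit. Lower semicontinuity of the entropy gives ${\rm Ent}_{\nu}(\bar\mu)\le\liminf_i{\rm Ent}_{\nu_i}(\bar\mu_i)$, the recovery sequences give $\sum_k\lambda_k{\rm Ent}_{\nu_i}(\mu^k_i)\to\sum_k\lambda_k{\rm Ent}_{\nu}(\mu^k)$, and the variance term converges by the previous step; combining these yields ${\rm Ent}_{\nu}(\bar\mu)\le\int_{\mathcal{P}(X)}{\rm Ent}_{\nu}(\mu)\,\d\Omega(\mu)-\frac{K}{2}{\rm Var}(\Omega)$, which is exactly $\rm BCD(K,\infty)$ for $(X,\ds,\nu)$. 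I expect the main obstacle to be the second step, namely the simultaneous control of weak convergence and of entropy in the recovery sequences: joint lower semicontinuity only supplies one inequality, so producing the matching upper bound on the limiting entropy of the approximants is the delicate point on which the whole argument hinges, all other steps being soft compactness and semicontinuity arguments.
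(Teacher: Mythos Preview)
The paper is a survey and does not actually supply a proof of this stability theorem; the result is stated and the details are deferred to the companion paper \cite{HLZ-BC1}. Your outline is the natural adaptation of the Lott--Villani/Sturm $\Gamma$-convergence argument for ${\rm CD}(K,\infty)$ stability to the barycenter setting, and it is correct as written: the recovery-sequence step you flag as the delicate point is precisely the entropy $\Gamma$-$\limsup$ construction already carried out in \cite{Lott-Villani09,S-O1}, which transfers verbatim, and the remaining steps (compactness of $\mathcal P(Z)$, joint lower semicontinuity of the relative entropy, convergence of the variances via the two-sided estimate, and identification of the limit as a barycenter supported on $X$) are exactly the soft arguments you describe. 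There is nothing to correct and, absent a proof in the present paper, nothing further to compare.
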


\bigskip

Next we introduce  some new geometric inequalities,  as simple but interesting applications of our BCD theory. For simplicity, we  will only deal with metric measure spaces.  We  refer the readers to a  recent paper \cite{KW-BS} by Kolesnikov and Werner  for more applications of Wasserstein barycenter.  In addition,  see \cite{HLZ-BC1} for more applications of our BCD theory to multi-marginal optimal transport.

\begin{proposition}[Multi-marginal logarithmic Brunn--Minkowski inequality]
	Let $\ms$ be a ${\rm BCD}(0,\infty)$ metric measure space,  $E_1,..., E_n$ be  bounded measurable sets with positive measure  and $\lambda_1,\dots, \lambda_n\in (0, 1)$ with $\sum_i \lambda_i=1$. Then
	\[
	\mm(E) \geq \mm(E_1)^{\lambda_1}... \mm(E_n)^{\lambda_n}
	\]
	where 
	$$
	E:=\left\{x ~\text{is a barycenter of}~\sum_{i=1}^n \lambda_i\delta_{x_i}: x_i\in E_i, i=1,\dots, n \right\}.
	$$
\end{proposition}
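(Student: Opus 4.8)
The plan is to apply the ${\rm BCD}(0,\infty)$ inequality to a measure $\Omega$ assembled from the normalized restrictions of $\mm$ to the sets $E_i$, and then to trap the entropy of the resulting barycenter between two explicit logarithms. Concretely, I would first set $\mu_i := \mm(E_i)^{-1}\,\mm|_{E_i}$ for $i=1,\dots,n$; these are genuine probability measures because each $E_i$ has positive measure, and they have finite variance because the $E_i$ are bounded. Since the density of $\mu_i$ equals the constant $\mm(E_i)^{-1}$ on $E_i$, a one-line computation gives ${\rm Ent}_{\mm}(\mu_i) = -\ln\mm(E_i)$. I then form the discrete measure $\Omega := \sum_{i=1}^n \lambda_i\,\delta_{\mu_i}$ on $(\mathcal{P}(X),W_2)$, which is concentrated on finitely many measures and has finite variance, so that the hypothesis applies. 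With $K=0$ the variance term in \eqref{eq:defBCD infty} drops out, and ${\rm BCD}(0,\infty)$ furnishes a barycenter $\bar\mu$ of $\Omega$ satisfying
\[
{\rm Ent}_{\mm}(\bar\mu)\;\leq\;\sum_{i=1}^n \lambda_i\,{\rm Ent}_{\mm}(\mu_i)\;=\;-\sum_{i=1}^n \lambda_i\ln\mm(E_i).
\]
In particular the right-hand side is finite, so $\bar\mu\ll\mm$ and we may write $\bar\mu=\bar\rho\,\mm$.

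The complementary step is the lower bound ${\rm Ent}_{\mm}(\bar\mu)\geq -\ln\mm(E)$. Granting for the moment that $\bar\mu$ is concentrated on $E$, this is a pure Jensen estimate: comparing $\bar\rho$ against the normalized uniform measure $\nu:=\mm(E)^{-1}\,\mm|_{E}$ and using convexity of $x\mapsto x\ln x$ gives $\int_E \bar\rho\ln\bar\rho\dm \geq -\ln\mm(E)$, which is exactly ${\rm Ent}_{\mm}(\bar\mu)\geq -\ln\mm(E)$. Chaining this with the upper bound above yields $-\ln\mm(E)\leq -\sum_i\lambda_i\ln\mm(E_i)$, and exponentiating produces the asserted inequality $\mm(E)\geq \prod_i\mm(E_i)^{\lambda_i}$.

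The crux, and the step I expect to be the main obstacle, is verifying that the barycenter $\bar\mu$ is concentrated on the barycenter set $E$. Here I would appeal to the equivalence between the Wasserstein barycenter problem and multi-marginal optimal transport: any barycenter of $\sum_i\lambda_i\delta_{\mu_i}$ arises as the pushforward, under a measurable selection of the barycentric map $(x_1,\dots,x_n)\mapsto x$ (a minimizer of $y\mapsto\sum_i\lambda_i\ds^2(y,x_i)$), of an optimal plan $\gamma\in\Pi(\mu_1,\dots,\mu_n)$ for the multi-marginal cost $c(x_1,\dots,x_n)=\min_y\sum_i\lambda_i\ds^2(y,x_i)$. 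Because each $\mu_i$ is concentrated in $E_i$, the plan $\gamma$ is concentrated on $\prod_i E_i$, whence $\gamma$-almost every tuple is sent into $E$ by the very definition of $E$; thus $\bar\mu$ is concentrated on $E$. I would cite \cite{HLZ-BC1} for this reformulation in the present generality. The remaining technical points are routine: the measurability of $E$ and of the barycentric selection, handled by a measurable-selection argument, together with the finite-variance and boundedness bookkeeping ensuring all quantities are well defined.
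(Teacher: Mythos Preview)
The paper you are comparing against is a survey; this proposition is stated there as an application without proof, with details deferred to the companion article \cite{HLZ-BC1}. So there is no in-paper argument to compare against.

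That said, your outline is the standard route and is essentially correct: apply the ${\rm BCD}(0,\infty)$ definition to $\Omega=\sum_i\lambda_i\delta_{\mu_i}$ with $\mu_i=\mm(E_i)^{-1}\mm\restr{E_i}$, compute ${\rm Ent}_\mm(\mu_i)=-\ln\mm(E_i)$, then sandwich ${\rm Ent}_\mm(\bar\mu)$ between $-\ln\mm(E)$ from below (Jensen, once $\bar\mu$ is concentrated on $E$) and $-\sum_i\lambda_i\ln\mm(E_i)$ from above. You have correctly isolated the only nontrivial step, namely that $\bar\mu$ is concentrated on the pointwise barycenter set $E$, and your appeal to the multi-marginal reformulation in \cite{HLZ-BC1} is the intended mechanism.

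One logical subtlety to make explicit: Definition~\ref{def:bcd} is existential---it only asserts that \emph{some} barycenter $\bar\mu$ satisfies \eqref{eq:defBCD infty}. Your lower bound therefore requires that \emph{every} Wasserstein barycenter of $\Omega$ (or at least the one produced by BCD) is concentrated on $E$. The multi-marginal characterization, as you invoke it, does deliver this universal statement (any barycenter is a barycentric push-forward of an optimal multi-plan with marginals $\mu_i$, hence supported where the $\mu_i$ are), but you should state it that way rather than as a construction of one particular barycenter. The residual measurability issues (of $E$ and of the barycentric selection) are genuine but routine, as you note.
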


\medskip

\begin{proposition}[A     functional Blaschke--Santal\'o type  inequality]
	Let $\ms$ be  a ${\rm BCD}(1,\infty)$ metric measure space. Then we  have
	\begin{equation*}\label{eq1:th3}
		\prod_{i=1}^k \int_X e^{f_i}\,\d \mm\leq 1 
	\end{equation*}
	for any  measurable functions   $f_i$ on $X$ such that $\frac {e^{f_i}}{\int e^{f_i}\,\d \mm}\in \pr_2(X, \ds)$ and
	\begin{equation*}\label{eq:duality}
		\sum_{i=1}^k f_i(x_i)\leq  \frac 1{2} \inf_{x\in X} \sum_{i=1}^k  \d(x, x_i)^2\qquad \forall x_i\in X, i=1,2,...,k.
	\end{equation*}
\end{proposition}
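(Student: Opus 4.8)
The plan is to feed the ${\rm BCD}(1,\infty)$ inequality with the probability measures generated by the $f_i$ and then read off the product bound. Concretely, I would set $Z_i := \int_X e^{f_i}\dm$ and $\mu_i := Z_i^{-1}e^{f_i}\,\mm$, so that each $\mu_i \in \mathcal{P}_2(X,\ds)$ by hypothesis (in particular $Z_i\in(0,\infty)$). A direct computation with $\log(Z_i^{-1}e^{f_i}) = f_i - \log Z_i$ gives the entropy identity
\[ {\rm Ent}_{\mm}(\mu_i) = \int_X f_i\,\d\mu_i - \log Z_i, \qquad\text{i.e.}\qquad \log Z_i = \int_X f_i\,\d\mu_i - {\rm Ent}_{\mm}(\mu_i). \]
I would then take $\Omega := \frac1k\sum_{i=1}^k\delta_{\mu_i}$, which is concentrated on finitely many measures and has finite variance, and apply Definition \ref{def:bcd} with $K=1$ to get a barycenter $\bar\mu$ with ${\rm Ent}_{\mm}(\bar\mu)\leq\frac1k\sum_i{\rm Ent}_{\mm}(\mu_i)-\frac12{\rm Var}(\Omega)$. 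Summing the entropy identity over $i$ and inserting this bound yields
\[ \sum_{i=1}^k \log Z_i \;\leq\; \sum_{i=1}^k \int_X f_i\,\d\mu_i - k\,{\rm Ent}_{\mm}(\bar\mu) - \frac{k}{2}{\rm Var}(\Omega). \]

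The second step is to control $\sum_i\int_X f_i\,\d\mu_i$ by the variance, which is where the dual constraint on the $f_i$ enters. Since $\bar\mu$ is a barycenter, ${\rm Var}(\Omega)=\frac1k\sum_i W_2^2(\bar\mu,\mu_i)$; gluing optimal couplings between $\bar\mu$ and each $\mu_i$ along their common marginal $\bar\mu$ produces $\gamma\in\Pi(\mu_1,\dots,\mu_k)$ with $\int\inf_x\sum_i\ds(x,x_i)^2\,\d\gamma\leq\sum_i W_2^2(\bar\mu,\mu_i)=k\,{\rm Var}(\Omega)$, where one bounds the pointwise infimum by the value at the actual center variable. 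Integrating the standing hypothesis $\sum_i f_i(x_i)\leq\frac12\inf_x\sum_i\ds(x,x_i)^2$ against $\gamma$, and using that the $i$-th marginal of $\gamma$ is $\mu_i$, gives
\[ \sum_{i=1}^k\int_X f_i\,\d\mu_i = \int\sum_{i=1}^k f_i(x_i)\,\d\gamma \leq \frac12\int\inf_x\sum_{i=1}^k\ds(x,x_i)^2\,\d\gamma \leq \frac{k}{2}{\rm Var}(\Omega). \]

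Combining the two displays, the variance terms cancel and I am left with $\sum_{i=1}^k\log Z_i\leq -k\,{\rm Ent}_{\mm}(\bar\mu)$. Since $\mm$ is a probability measure, Jensen's inequality gives ${\rm Ent}_{\mm}(\bar\mu)\geq 0$, whence $\sum_i\log Z_i\leq 0$ and $\prod_i Z_i\leq 1$, as claimed. The points I expect to be the main obstacle are the two that are simple to state but need care. First is the gluing estimate for ${\rm Var}(\Omega)$ in a (possibly extended) metric measure space, together with the measurability of $(x_1,\dots,x_k)\mapsto\inf_x\sum_i\ds(x,x_i)^2$ and the integrability of $\sum_i f_i(x_i)$ against $\gamma$; the latter is in fact automatic, since the integrated constraint bounds the sum above by $\frac k2{\rm Var}(\Omega)<\infty$ while each term $\int_X f_i\,\d\mu_i=Z_i^{-1}\int_X f_ie^{f_i}\dm$ is bounded below (as $te^t\geq -1/e$). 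Second is the use of ${\rm Ent}_{\mm}(\bar\mu)\geq 0$, which is precisely where the normalization $\mm(X)=1$ is needed; indeed, testing with $f_i\equiv 0$ shows the asserted inequality cannot hold unless $\mm(X)\leq 1$, so this hypothesis is not merely a convenience but is forced by the statement.
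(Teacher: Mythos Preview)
The paper states this proposition as an application of the ${\rm BCD}$ framework but does not include a proof (it refers to \cite{HLZ-BC1} for details), so there is no in-paper argument to compare against directly. Your proposal is correct and is precisely the natural route one expects: define $\mu_i=Z_i^{-1}e^{f_i}\mm$, read the entropy identity $\log Z_i=\int f_i\,\d\mu_i-{\rm Ent}_\mm(\mu_i)$, apply the ${\rm BCD}(1,\infty)$ Jensen inequality to $\Omega=\frac1k\sum_i\delta_{\mu_i}$, and cancel the variance term by integrating the duality constraint against a coupling glued along $\bar\mu$. All the steps are sound, including the integrability bookkeeping you flag (the lower bound $te^t\geq -1/e$ handles $\int f_i\,\d\mu_i>-\infty$, and the glued coupling inequality $\int\inf_x\sum_i\ds(x,x_i)^2\,\d\gamma\leq k\,{\rm Var}(\Omega)$ is the standard multi-marginal estimate).

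Your final observation is also correct and worth emphasizing: the inequality ${\rm Ent}_\mm(\bar\mu)\geq 0$ requires $\mm(X)\leq 1$, and indeed the test $f_i\equiv 0$ shows the proposition is false otherwise. This normalization is implicit in the paper's statement (as is standard for functional Blaschke--Santal\'o inequalities) but is not explicitly assumed there; you are right to isolate it as a necessary hypothesis rather than a technical convenience.
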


\bigskip

At the end of this paper, we list some  problems we would like to study in the future.

\begin{problem} 
As explained in Section \ref{synthetic}, the difference between $\rm CD$ condition and $\rm RCD$ condition lies in whether they have a ``Riemannian structure" or not. It is natural to ask: Do Finsler manifolds with lower   Ricci curvature   bound from below satisfy BCD condition? Is there a ``Riemmanian Barycenter-curvature dimension condition"? 
\end{problem}

\begin{problem}Can we find more similarities and differences between  $\rm CD$ condition and  $\rm BCD$ condition?  
\end{problem}

\begin{problem} In the preceding examples of BCD spaces, they are either geodesic or ``almost geodesic". It is interesting to find a $\rm BCD$ space with less geodesics.  Moreover,   is there  a  $\rm BCD$ space which has a fractal structure?
\end{problem}

\bigskip
\noindent \textbf{Declaration.}
{The  authors declare that there is no conflict of interest and the manuscript has no associated data.}

\medskip

\noindent \textbf{Acknowledgement}:   This work is supported in part by Young Scientist Programs of the Ministry of Science and Technology of China (2021YFA1000900, 2021YFA1002200), and National Natural Science Foundation of China  (12201596).

\addcontentsline{toc}{section}{References}
\def\cprime{$'$}
\providecommand{\bysame}{\leavevmode\hbox to3em{\hrulefill}\thinspace}
\providecommand{\MR}{\relax\ifhmode\unskip\space\fi MR }
\providecommand{\MRhref}[2]{%
  \href{http://www.ams.org/mathscinet-getitem?mr=#1}{#2}
}
\providecommand{\href}[2]{#2}


\begin{thebibliography}{AGMR15}

\bibitem[AC11]{AguehCarlier}
Martial Agueh and Guillaume Carlier, \emph{Barycenters in the {W}asserstein
  space}, SIAM J. Math. Anal. \textbf{43} (2011), no.~2, 904--924. \MR{2801182}

\bibitem[AES16]{ambrosio2016optimal}
Luigi Ambrosio, Matthias Erbar, and Giuseppe Savar{\'e}, \emph{Optimal
  transport, cheeger energies and contractivity of dynamic transport distances
  in extended spaces}, Nonlinear Analysis \textbf{137} (2016), 77--134.

\bibitem[AG11]{AG-U}
Luigi Ambrosio and Nicola Gigli, \emph{A user's guide to optimal transport},
  Modelling and Optimisation of Flows on Networks, Lecture Notes in
  Mathematics, Vol. 2062, Springer, 2011.

\bibitem[AGMR15]{AGMR-R}
Luigi Ambrosio, Nicola Gigli, Andrea Mondino, and Tapio Rajala,
  \emph{Riemannian {R}icci curvature lower bounds in metric measure spaces with
  {$\sigma$}-finite measure}, Trans. Amer. Math. Soc. \textbf{367} (2015),
  no.~7, 4661--4701. \MR{3335397}

\bibitem[AGS08]{AGS-G}
Luigi Ambrosio, Nicola Gigli, and Giuseppe Savar{\'e}, \emph{Gradient flows in
  metric spaces and in the space of probability measures}, second ed., Lectures
  in Mathematics ETH Z\"urich, Birkh\"auser Verlag, Basel, 2008. \MR{2401600
  (2009h:49002)}

\bibitem[AGS14]{AGS-M}
\bysame, \emph{Metric measure spaces with {R}iemannian {R}icci curvature
  bounded from below}, Duke Math. J. \textbf{163} (2014), 1405--1490.

\bibitem[AGS15]{AGS-B}
\bysame, \emph{Bakry-\'{E}mery curvature-dimension condition and {R}iemannian
  {R}icci curvature bounds}, Ann. Probab. \textbf{43} (2015), no.~1, 339--404.
  \MR{3298475}

\bibitem[BE85]{BEdiffusions}
D.~Bakry and Michel \'Emery, \emph{Diffusions hypercontractives}, S\'eminaire
  de probabilit\'es, {XIX}, 1983/84, Lecture Notes in Math., vol. 1123,
  Springer, Berlin, 1985, pp.~177--206. \MR{889476}

\bibitem[BGL14]{BakryGentilLedoux14}
Dominique Bakry, Ivan Gentil, and Michel Ledoux, \emph{Analysis and geometry of
  {M}arkov diffusion operators}, Grundlehren der Mathematischen Wissenschaften
  [Fundamental Principles of Mathematical Sciences], vol. 348, Springer, Cham,
  2014. \MR{3155209}

\bibitem[CEMS01]{CMS01}
Dario Cordero-Erausquin, Robert~J. McCann, and Michael Schmuckenschl\"{a}ger,
  \emph{A {R}iemannian interpolation inequality \`a la {B}orell, {B}rascamp and
  {L}ieb}, Invent. Math. \textbf{146} (2001), no.~2, 219--257. \MR{1865396}

\bibitem[Che99]{C-D}
J.~Cheeger, \emph{Differentiability of {L}ipschitz functions on metric measure
  spaces}, Geom. Funct. Anal. \textbf{9} (1999), no.~3, 428--517.
  \MR{2000g:53043}

\bibitem[DS08]{daneri2008eulerian}
Sara Daneri and Giuseppe Savar{\'e}, \emph{Eulerian calculus for the
  displacement convexity in the wasserstein distance}, SIAM Journal on
  Mathematical Analysis \textbf{40} (2008), no.~3, 1104--1122.

\bibitem[EH15]{EH-Configure}
Matthias Erbar and Martin Huesmann, \emph{Curvature bounds for configuration
  spaces}, Calc. Var. Partial Differential Equations \textbf{54} (2015), no.~1,
  397--430. \MR{3385165}

\bibitem[EKS15]{EKS-O}
Matthias Erbar, Kazumasa Kuwada, and Karl-Theodor Sturm, \emph{On the
  equivalence of the entropic curvature-dimension condition and {B}ochner's
  inequality on metric measure spaces}, Invent. Math. \textbf{201} (2015),
  no.~3, 993--1071. \MR{3385639}

\bibitem[FSS10]{FSS10}
Shizan Fang, Jinghai Shao, and Karl-Theodor Sturm, \emph{Wasserstein space over
  the {W}iener space}, Probab. Theory Related Fields \textbf{146} (2010),
  no.~3-4, 535--565. \MR{2574738}

\bibitem[FU04]{FU}
D.~Feyel and A.~S. \"Ust\"unel, \emph{Monge-{K}antorovitch measure
  transportation and {M}onge-{A}mp\`ere equation on {W}iener space}, Probab.
  Theory Related Fields \textbf{128} (2004), no.~3, 347--385. \MR{2036490}

\bibitem[Gig15]{G-O}
Nicola Gigli, \emph{On the differential structure of metric measure spaces and
  applications}, Mem. Amer. Math. Soc. \textbf{236} (2015), no.~1113, vi+91.
  \MR{3381131}

\bibitem[Gig23]{gigli2023giorgi}
\bysame, \emph{De giorgi and gromov working together}, arXiv preprint
  arXiv:2306.14604 (2023).

\bibitem[GKO13]{GKO-H}
Nicola Gigli, Kazumasa Kuwada, and Shin-ichi Ohta, \emph{Heat flow on
  {A}lexandrov spaces}, Communications on Pure and Applied Mathematics
  \textbf{66} (2013), no.~3, 307--331.

\bibitem[HLZ24]{HLZ-BC1}
Bang-Xian Han, Dengyu Liu, and Zhuonan Zhu, \emph{On the geometry of
  wasserstein barycenter {I}}, Preprint at arXiv:2412.01190, 2024.

\bibitem[Jia17]{JiangWasserstein}
Yin Jiang, \emph{Absolute continuity of {W}asserstein barycenters over
  {A}lexandrov spaces}, Canad. J. Math. \textbf{69} (2017), no.~5, 1087--1108.
  \MR{3693149}

\bibitem[KP17]{KimPassAIM}
Young-Heon Kim and Brendan Pass, \emph{Wasserstein barycenters over
  {R}iemannian manifolds}, Adv. Math. \textbf{307} (2017), 640--683.
  \MR{3590527}

\bibitem[KW22]{KW-BS}
Alexander~V. Kolesnikov and Elisabeth~M. Werner, \emph{Blaschke-{S}antal\'o{}
  inequality for many functions and geodesic barycenters of measures}, Adv.
  Math. \textbf{396} (2022), Paper No. 108110, 44. \MR{4370472}

\bibitem[Led14]{LedouxICM}
Michel Ledoux, \emph{Heat flows, geometric and functional inequalities},
  Proceedings of the {I}nternational {C}ongress of {M}athematicians---{S}eoul
  2014. {V}ol. {IV}, Kyung Moon Sa, Seoul, 2014, pp.~117--135. \MR{3727605}

\bibitem[LV09]{Lott-Villani09}
John Lott and C{\'e}dric Villani, \emph{Ricci curvature for metric-measure
  spaces via optimal transport}, Ann. of Math. (2) \textbf{169} (2009), no.~3,
  903--991. \MR{2480619 (2010i:53068)}

\bibitem[Ma23]{ma2023}
Jianyu Ma, \emph{Absolute continuity of {W}asserstein barycenters on manifolds
  with a lower {R}icci curvature bound}, Preprint at arXiv:2310.13832, 2023.

\bibitem[McC97]{mccann1997convexity}
Robert~J McCann, \emph{A convexity principle for interacting gases}, Advances
  in mathematics \textbf{128} (1997), no.~1, 153--179.

\bibitem[MS20]{muratori2020gradient}
Matteo Muratori and Giuseppe Savar{\'e}, \emph{Gradient flows and evolution
  variational inequalities in metric spaces. i: Structural properties}, Journal
  of Functional Analysis \textbf{278} (2020), no.~4, 108347.

\bibitem[Oht09]{Ohta09}
Shin-ichi Ohta, \emph{Finsler interpolation inequalities}, Calc. Var. Partial
  Differential Equations \textbf{36} (2009), no.~2, 211--249. \MR{2546027
  (2011m:58027)}

\bibitem[OS09]{OS-H}
Shin-Ichi Ohta and Karl-Theodor Sturm, \emph{Heat flow on {F}insler manifolds},
  Comm. Pure Appl. Math. \textbf{62} (2009), no.~10, 1386--1433. \MR{2547978}

\bibitem[OV00]{OttoVillani00}
F.~Otto and C.~Villani, \emph{Generalization of an inequality by {T}alagrand
  and links with the logarithmic {S}obolev inequality}, J. Funct. Anal.
  \textbf{173} (2000), no.~2, 361--400. \MR{1760620}

\bibitem[San15]{SantambrogioBook}
Filippo Santambrogio, \emph{Optimal transport for applied mathematicians},
  Progress in Nonlinear Differential Equations and their Applications, vol.~87,
  Birkh\"auser/Springer, Cham, 2015, Calculus of variations, PDEs, and
  modeling. \MR{3409718}

\bibitem[Stu06a]{S-O1}
Karl-Theodor Sturm, \emph{On the geometry of metric measure spaces. {I}}, Acta
  Math. \textbf{196} (2006), no.~1, 65--131. \MR{MR2237206}

\bibitem[Stu06b]{S-O2}
\bysame, \emph{On the geometry of metric measure spaces. {II}}, Acta Math.
  \textbf{196} (2006), no.~1, 133--177. \MR{MR2237207 (2007k:53051b)}

\bibitem[Vil09]{V-O}
C{\'e}dric Villani, \emph{Optimal transport. old and new}, Grundlehren der
  Mathematischen Wissenschaften, vol. 338, Springer-Verlag, Berlin, 2009.
  \MR{MR2459454}

\bibitem[vRS05]{SVR-T}
Max-K. von Renesse and Karl-Theodor Sturm, \emph{Transport inequalities,
  gradient estimates, entropy, and {R}icci curvature}, Comm. Pure Appl. Math.
  \textbf{58} (2005), no.~7, 923--940. \MR{2142879}

\end{thebibliography}
\end{document}